\newtheorem{proposition}{Proposition}[section]
\newtheorem{theorem}{Theorem}[section]
\newtheorem{lemma}[proposition]{Lemma}
\newtheorem{remark}{Remark}[section]
\newtheorem{corollary}[theorem]{Corollary}
\newcommand{\N}{\mathbb{N}}
\newcommand{\R}{\mathbb{R}}
\newcommand{\boF}{\mathcal{F}}
\newcommand{\boJ}{\mathcal{J}}
\newcommand{\boK}{\mathcal{K}}
\newcommand{\boL}{\mathcal{L}}
\newcommand{\boO}{\mathcal{O}}
\newcommand{\boS}{\mathcal{S}}
\newcommand{\boT}{\mathcal{T}}
\newcommand{\boW}{\mathcal{W}}
\numberwithin{equation}{section}
\title{Infinitely many positive solutions of a Gross-Pitaevskii equation in the presence of a harmonic potential and combined nonlinearities }
\author{Yakine Bahri and Hichem Hajaiej}
\date{}
\begin{document}

\maketitle

\begin{abstract}
The main goal of this paper is to address an important conjecture in the field of differential equations in the presence of a harmonic potential. While in the subcritical case, the uniqueness of positive solution has been addressed by Hirose and Ohta in \cite{HiroseOhta} in 2007, the problem has remained open for years in the supercritical case. In \cite{Foued2}, the author obtained interesting numerical computations suggesting that for some bifurcating parameter $\lambda$, the equation has many positive solutions that vanish at infinity. In this paper, we provide a proof to this claim by constructing an accountable number of solutions that bifurcate from the unique singular solutions with $\lambda$ close to the first eigenvalue $\lambda_1$ of the harmonic operator $-\Delta + |x|^2$. Our method hinges on a matching argument, and applies to the supercritical case, and to the supercritical case in the presence of a subcritical, critical or supercritical perturbation. 
\end{abstract}


\section{Introduction}
We consider the following scalar field equation:

\begin{equation}
\label{SFE-1}
\left\{ \begin{array}{c} \Delta u -( |x|^2-\lambda ) u + |u|^{q-1}u + |u|^{p-1}u=0 ,\\
u >0,\\
\lim_{r\to \infty} u(r)=0,
\end{array}
\right.
\end{equation}
where $d\geq3$, $\frac{d+2}{d-2}<p<p_{\rm JL}:=\left\{\begin{array}{l}+\infty \text { for } d \leq 10 \\ 1+\frac{4}{d-4-2 \sqrt{d-1}} \text { for } d \geq 11\end{array}\right.$ and  $1<q < \frac{p+1}{2} $. $|u|^{q-1}u$ is seen as a perturbation of the original equation: 

\begin{equation}
\label{SFE}
\left\{ \begin{array}{c} \Delta u -( |x|^2-\lambda ) u  + |u|^{p-1}u=0 ,\\
u >0,\\
\lim_{r\to \infty} u(r)=0,
\end{array}
\right.
\end{equation}

This equation models the propagation of a laser beam in an optical fiber. It also appears in several crucial applications like the theory of Bose-Einstein condensates. It is referred to as the Gross-Pitaevskii equation.

We denote by $\lambda_1$ the first eigenvalue of the harmonic operator $-\Delta + |x|^2$ and $\Sigma := \{u \in H^1(\R^d); |x| u \in L^2(\R^d) \}$. By the breakthrough result of Li and Ni \cite{Li-Wei}, we know that any solution of \eqref{SFE-1} is radially symmetric. Therefore \eqref{SFE-1} is equivalent to the following reduced ordinary differential equation:
\begin{equation}
\label{ODE-pq}
\left\lbrace \begin{array}{c}\partial_{rr}u+\frac{d-1}{r} \partial_{r}u -( r^2-\lambda ) u + |u|^{q-1}u + |u|^{p-1}u=0, \quad r>0,\\
u >0,\\
\lim_{r\to \infty} u(r)=0,
\end{array}
\right.
\end{equation}
In the case of a single non-linearity, this equation becomes
\begin{equation}
\label{ODE-p}
\left\lbrace \begin{array}{c}\partial_{rr}u+\frac{d-1}{r} \partial_{r}u -( r^2-\lambda ) u + |u|^{p-1}u=0, \quad r>0,\\
u >0,\\
\lim_{r\to \infty} u(r)=0,
\end{array}
\right.
\end{equation}
It has a no nontrivial solution in $\Sigma$ for $\lambda \geq \lambda_1$. In the subcritical case $p\in (1,2^*-1)$ where $2^*=\frac{2d}{d-2}$, Hirose and Ohta \cite{HiroseOhta} showed the existence and uniqueness of the solutions when $\lambda<\lambda_1=d$. Hadj Selem et al addressed the existence and symmetry of ground state solutions and showed the optimality of their conditions. They also addressed the orbital stability of standing waves \cite{HHMT}.

We denote $R_1:=\{(\lambda,p) \in \R \times \R \ : \ \lambda < d \  \text{and} \  p\in (1,2^*-1) \}$. For $p \geq 2^*-1$, \eqref{ODE-p} has no nontrivial solutions for $\lambda \leq 0$. Therefore in the framework of this study, we will only consider $0<\lambda<d$ and denote $R_2:=\{(\lambda,p) \in \R \times \R \ : \ \lambda < d \  \text{and} \  p\geq 2^*-1 \}$.

In $R_1$, it has been proven by Hirose and Ohta \cite{HiroseOhta} that \eqref{ODE-p} has a unique solution. In $R_2$, F. Hadj Selem \cite{Foued2} obtained some numerical results that suggest the following claim :"there exists some $0<\lambda<d$ such that equation \eqref{ODE-p} does not have a unique positive solution". Despite the importance of such claim in understanding the supercritical case of a class of Gross-Pitaeveskii equations, the literature remained silent for years. In this paper, we provide an answer to this question by constructing an accountable number of solutions to \eqref{ODE-p} that bifurcate from the unique singular solution when $(\lambda,p)\in R_2$. This solution has been determined when $\lambda=\lambda_*\in (0,d)$ in \cite{HSKW}.

The authors also showed that the behavior of the singular solution $\Phi_p$ near
the origin is given by
\begin{equation*}
\Phi_p(r)=A(p,d)r^{-\frac{2}{p-1}}\left(1 +O(r^2) \right) \quad {\rm as } \ r\to 0,
\end{equation*}
where
\begin{equation}
\label{def-Apd}
A(p,d):=\left(\frac{2}{p-1} \left(d-2-\frac{2}{p-1}\right)\right)^{\frac{1}{p-1}}.
\end{equation}

In our case, we denote by $\Phi$ the singular solution to \eqref{ODE-pq} for $\lambda=\lambda_*$. We use their argument in order to prove its existence and behavior near the origin. This is stated in the following theorem. (See the appendix for the sketch of its proof). Note that, we believe that the uniqueness of $\lambda_*$ does not depend on the perturbation term $|u|^{q-1}u$ in our case. The proof should be similar to the equation in  can be proven exactly as in the \cite{HSKW}. However, in \cite{BFPS}, Pelinovsky et al mentioned that its proof is not completely correct. This is why we decided to not discuss the uniqueness in this paper since it is not crucial for our argument.

\begin{theorem}
\label{Thm-sing}
Let $1<q<p.$ There exists an eigenvalue $\lambda_{*} \epsilon$ $\left(0, \lambda_{1}\right)$ such that \eqref{ODE-pq} has a unique radial singular solution $\Phi$. Its asymptotic behavior near the origin is given by
\begin{equation}
\label{behav:phi:origin}
\Phi(x)=A(p, d)|x|^{-2 /(p-1)}\left\{1+\boO\left(|x|^{\frac{2(p-q)}{p-1}}\right)\right\} \quad \text { as } \quad x \rightarrow 0
\end{equation}
with
$A(p,d)$ is defined in \eqref{def-Apd}. Moreover,
\begin{equation}
\label{behav:deriv-phi:origin}
\Phi'(x)= -\frac{2A(p, d)}{p-1} |x|^{-\frac{2}{p-1}-1}\left\{1+\boO\left(|x|^{\frac{2(p-q)}{p-1}}\right)\right\} \quad \text { as } \quad x \rightarrow 0.
\end{equation}

\end{theorem}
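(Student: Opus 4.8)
The plan is to follow the dynamical-systems strategy of \cite{HSKW}: an Emden--Fowler change of variables turns \eqref{ODE-pq} into an asymptotically autonomous second-order equation, and the singular solution is then identified as the distinguished orbit emanating from a hyperbolic fixed point, with the perturbation $|u|^{q-1}u$ producing the leading correction.

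\textbf{Step 1 (reduction).} Set $\alpha := 2/(p-1)$ and write $u(r)=r^{-\alpha}v(t)$, $t=\ln r$, so that $r\to 0$ corresponds to $t\to-\infty$. Substituting into \eqref{ODE-pq}, dividing by $r^{-\alpha-2}$, and using $\alpha p=\alpha+2$, one obtains
\begin{equation}
\label{plan:EF}
v_{tt}+(d-2-2\alpha)\,v_t-\alpha(d-2-\alpha)\,v+v^p+e^{\alpha(p-q)t}\,v^q+\lambda e^{2t}\,v-e^{4t}\,v=0 .
\end{equation}
As $t\to-\infty$ the three exponential terms vanish, so \eqref{plan:EF} is a decaying perturbation of the autonomous equation $v_{tt}+(d-2-2\alpha)v_t-\alpha(d-2-\alpha)v+v^p=0$. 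Its nontrivial constant state is $v\equiv A:=A(p,d)$, because $A^{p-1}=\alpha(d-2-\alpha)$ by \eqref{def-Apd}; this fixed point encodes the sought leading profile $\Phi\sim A r^{-\alpha}$.

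\textbf{Step 2 (local orbit and expansion).} Linearizing \eqref{plan:EF} at $v=A$ produces the characteristic polynomial $\mu^2+(d-2-2\alpha)\mu+2(d-2-\alpha)$, whose roots have positive product $2(d-2-\alpha)$ and, in the supercritical range $\alpha<(d-2)/2$, negative sum $2\alpha-(d-2)$; hence $\mathrm{Re}\,\mu_\pm<0$ (the hypothesis $p<p_{\rm JL}$ only decides whether they are real or a complex pair). Thus $A$ is attracting as $t\to+\infty$ and repelling as $t\to-\infty$, so for fixed $\lambda$ there is a unique orbit converging to $A$ as $t\to-\infty$. I would realize it by a contraction mapping: writing $v=A+\phi$, inverting the linear part by variation of parameters while suppressing the two homogeneous modes (which grow as $t\to-\infty$), and solving for the forced part. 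Among the forcings $e^{\alpha(p-q)t},e^{2t},e^{4t}$, the first decays slowest since $\alpha(p-q)=\tfrac{2(p-q)}{p-1}<2$ (because $q>1$) and, being positive, is non-resonant with $\mu_\pm$; hence $\phi(t)=\boO\!\big(e^{\alpha(p-q)t}\big)$ and likewise $v_t=\boO\!\big(e^{\alpha(p-q)t}\big)$. Translating back gives \eqref{behav:phi:origin}, and inserting $u'=r^{-\alpha-1}(-\alpha v+v_t)$ gives \eqref{behav:deriv-phi:origin}.

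\textbf{Step 3 (selection of $\lambda_*$).} For each admissible $\lambda$ Step 2 yields a singular branch near the origin; it remains to fix $\lambda=\lambda_*$ so that the forward continuation meets the decay requirement $\lim_{r\to\infty}u=0$. Near infinity the linear term $-r^2u$ dominates and the admissible solutions decay like $e^{-r^2/2}$, forming a one-parameter stable family, so matching the singular branch to this family is a codimension-one condition on $\lambda$. Following \cite{HSKW} I would produce $\lambda_*\in(0,\lambda_1)$ by a shooting argument, tracking the global orbit continuously in $\lambda$ and applying an intermediate-value/degree argument to the mismatch functional. This global matching is the main obstacle: it demands uniform control of the trajectory over the whole half-line together with the oscillation estimates that the sign of the discriminant (i.e. the condition $p<p_{\rm JL}$) makes available, whereas the local expansion of Steps 1--2 is essentially a bootstrap once the correct fixed point and forcing hierarchy are identified.
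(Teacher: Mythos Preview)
Your proposal is correct and follows essentially the same route as the paper. Both arguments perform the Emden--Fowler reduction $u(r)=r^{-\alpha}v(\ln r)$ to obtain an asymptotically autonomous second-order equation, identify $v\equiv A(p,d)$ as the relevant hyperbolic equilibrium, and construct the unique orbit converging to it as $t\to-\infty$ by a variation-of-parameters contraction that singles out the slowest forcing $e^{\alpha(p-q)t}=r^{2(p-q)/(p-1)}$; the paper merely splits your single substitution into several (a rescaling by $m=\{\alpha(d-2-\alpha)\}^{-1/2}$, a $\beta$-shift, and a sign flip $s=-\tau$), and likewise defers the global selection of $\lambda_*$ to the shooting argument of \cite{HSKW}.
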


Our aim is to construct an accountable number of solutions to \eqref{ODE-pq} that bifurcate from $\Phi$ with $\lambda$ close to $\lambda_*$.  As a consequence, we conclude that there exists an eigenvalue $\lambda \in (0,d)$ such that the equation \eqref{SFE-1} does not have a unique positive solution that vanishes at infinity. The main result is given by the following theorem.

\begin{theorem}
\label{thm:matching}
Let  $d\geq3$, $\frac{d+2}{d-2}<p<p_{\rm JL}$ and  $1<q< \min(\frac{p+1}{2},p).$ There exists $N\in \N$ large enough such that for all $n\geq N$, there exist and eigenvalue $\lambda_n$ and a smooth solution $u_{\theta_n}$ to \eqref{ODE-pq} with initial condition $\theta_n \to \infty $ and $\lambda_n \to \lambda_*$ when $n\to \infty $ . In addition,
$$\lim_{n\to \infty} u_{\theta_n} = \Phi \quad {\rm in} \ H^1(\R^d). $$
Moreover, for any large $n$, we have
$$\theta_n = \tilde{\theta}_n \left(1+ O\left( \tilde{\theta}_n^{\frac{1-\sigma}{\sigma}(\beta-q)}  \right) \right) $$
and
\begin{equation}
\label{Lambda-theta}
\lambda_n = \lambda_* + C_1 \tilde{\theta}_n^{\frac{1-\sigma}{\alpha}} \sin\left( \alpha \omega \log\left(\theta_n \right)  + C_2\right) \left( 1+ O\left( \tilde{\theta}_n^{\frac{1-\sigma}{\sigma}(\beta-q)}  \right) \right) 
\end{equation}
where $\alpha:=\frac{2}{p-1}$, $ \omega:=\sqrt{|\left(d-2\right)^2 -4 pA^{p-1}}|$, $\sigma=\frac{d}{2}-\frac{2}{p-1}$, $\beta=\frac{p+1}{2}$, $\tilde{\theta}_n= C_3 \exp \left( \frac{p-1}{\omega} n \right) $, $C_1>0$, $C_2$ and $C_3>0$ independent on $n$.
\end{theorem}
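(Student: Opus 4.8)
The plan is to realize the bifurcating solutions by a two–scale matched asymptotic construction and then to solve the resulting matching equation for a discrete sequence of parameters. First I would fix the regular solution $u_\theta$ of \eqref{ODE-pq} determined by $u_\theta(0)=\theta>0$, $u_\theta'(0)=0$, and pass to the inner variables $u_\theta(r)=\theta\,U_\theta(s)$, $s=\theta^{(p-1)/2}r$. Substituting into \eqref{ODE-pq} and dividing by $\theta^{p}$ gives
\[
U_\theta''+\frac{d-1}{s}\,U_\theta'+|U_\theta|^{p-1}U_\theta+\theta^{\,q-p}|U_\theta|^{q-1}U_\theta-\theta^{\,2-2p}s^{2}U_\theta+\lambda\,\theta^{\,1-p}U_\theta=0,
\]
with $U_\theta(0)=1$, $U_\theta'(0)=0$. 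Since $1<q<p$, every coefficient carrying a power of $\theta$ has negative exponent, so by continuous dependence on parameters $U_\theta\to w$ on compact sets as $\theta\to\infty$, where $w$ solves the Lane--Emden equation $w''+\frac{d-1}{s}w'+w^{p}=0$, $w(0)=1$, $w'(0)=0$. I would make this quantitative in a weighted norm, tracking the three exponents $\theta^{q-p}$, $\theta^{1-p}$, $\theta^{2-2p}$; the slowest, $\theta^{q-p}$ coming from the subcritical term, is what will ultimately produce the error factor $\tilde\theta_n^{\frac{1-\sigma}{\sigma}(\beta-q)}$.

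\textbf{Oscillatory asymptotics and inner data.} Next I would use the classical large--$s$ behaviour of $w$ valid in the range $\frac{d+2}{d-2}<p<p_{\rm JL}$. Linearizing the operator $\Delta+|\cdot|^{p-1}(\cdot)$ about the exact singular profile $A(p,d)\,s^{-\alpha}$ produces an Euler equation with indicial roots $-\frac{d-2}{2}\pm\frac{i}{2}\omega$, whence
\[
w(s)=A(p,d)\,s^{-\alpha}\Big(1+b\,s^{\,1-\sigma}\sin\big(\tfrac{\omega}{2}\log s+\varphi_0\big)+o(s^{\,1-\sigma})\Big),\qquad s\to\infty,
\]
with $b>0$, $\varphi_0$ computable, and $1-\sigma<0$ because $p>2^{*}-1$ forces $\sigma>1$. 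Undoing the rescaling, the leading term is exactly $A(p,d)\,r^{-\alpha}$ (the powers of $\theta$ cancel since $\alpha\frac{p-1}{2}=1$), so $u_\theta$ already agrees with the singular solution $\Phi$ to leading order and carries a logarithmically oscillating correction whose relative amplitude at the edge $r\sim1$ of the inner region is of order $(\theta^{(p-1)/2})^{1-\sigma}=\theta^{(1-\sigma)/\alpha}$. This is the inner data to be matched.

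\textbf{Outer region and matching.} On the outer side I would start from $\Phi$ at $\lambda=\lambda_*$ and study the solutions that decay and lie in $\Sigma$ at infinity while being singular like $\Phi$ near the origin. The linearization of \eqref{ODE-pq} about $\Phi$ reduces near $r=0$ to the same Euler operator with the same indicial pair $-\frac{d-2}{2}\pm\frac{i}{2}\omega$, so the decaying solution also oscillates logarithmically as $r\to0$, with phase and amplitude depending on $\lambda-\lambda_*$. Matching the two logarithmic oscillations across the transition radius gives a single scalar equation $\mathcal F(\theta,\lambda)=0$ which, after undoing $s=\theta^{(p-1)/2}r$ and tracking the phase, takes the leading form
\[
\lambda-\lambda_*=C_1\,\theta^{(1-\sigma)/\alpha}\sin\big(\alpha\omega\log\theta+C_2\big)\big(1+o(1)\big).
\]
I would solve $\mathcal F=0$ rigorously by an implicit--function/fixed--point argument in a weighted space, absorbing the non--scale--invariant contributions of $|u|^{q-1}u$ and of the $r^{2}$--term into the remainder $O\big(\tilde\theta_n^{\frac{1-\sigma}{\sigma}(\beta-q)}\big)$. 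The geometric ansatz $\tilde\theta_n=C_3\exp\big(\frac{p-1}{\omega}n\big)$ is the value at which the oscillation phase advances by a fixed increment per step, and the true solutions $\theta_n$ are found in a shrinking neighbourhood of $\tilde\theta_n$, giving $\theta_n=\tilde\theta_n(1+O(\cdot))$ together with \eqref{Lambda-theta}. Since the amplitude $\theta_n^{(1-\sigma)/\alpha}\to0$ along this sequence, we obtain $\lambda_n\to\lambda_*$ and $\theta_n\to\infty$.

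\textbf{$H^1$ convergence and the main difficulty.} Finally I would upgrade the matching to $u_{\theta_n}\to\Phi$ in $H^{1}(\R^{d})$: the shared singular part $A(p,d)\,r^{-\alpha}$ belongs to $H^{1}$ precisely because $p>2^{*}-1$ gives $\alpha<\frac{d-2}{2}$, while $u_{\theta_n}-\Phi$ is controlled on the inner scale by the oscillatory correction (of size $\theta_n^{(1-\sigma)/\alpha}\to0$) and on the outer scale by the uniform decay of both functions, so the difference tends to $0$ in $H^1$. The main obstacle is the rigorous matching of the third step: turning the formal two--scale expansion into a genuine solution requires uniform control of the full nonlinear flow across the transition region where the harmonic potential switches on, and in particular a delicate estimate of how the subcritical perturbation $|u|^{q-1}u$—which destroys the exact scaling invariance underlying the Lane--Emden analysis—enters $\mathcal F$. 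Showing that it perturbs the matching equation only at order $\tilde\theta_n^{\frac{1-\sigma}{\sigma}(\beta-q)}$, and that $\mathcal F=0$ is then solvable with the stated asymptotics, is the technical heart of the argument.
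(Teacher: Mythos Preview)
Your proposal follows essentially the same strategy as the paper: an interior solution obtained by rescaling to the Lane--Emden profile, an exterior solution obtained by perturbing the singular solution $\Phi$, and a matching argument at an intermediate radius exploiting the oscillatory Euler asymptotics valid for $p<p_{\rm JL}$. Two points where your outline is looser than the paper's implementation are worth flagging. First, the paper stresses that the exterior solution cannot be obtained by a naive perturbation of $\Phi$, because $\Phi$ corresponds to $\lambda_*$ while $u_{\rm ext}$ corresponds to $\lambda=\lambda_*+\varepsilon$, and these have different decay rates $r^{(\lambda-d)/2}e^{-r^2/2}$ at infinity; the paper resolves this by writing $u_{\rm ext}=r^{\varepsilon/2}(\Phi+\varepsilon(\psi+w))$, which shifts the effective dimension to $\tilde d=d+\varepsilon$ and produces a non-homogeneous linearized equation that must be analyzed separately. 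Your outer step (``linearization of \eqref{ODE-pq} about $\Phi$'') does not address this far-field mismatch. Second, the matching in the paper is a genuine $C^1$ gluing at a fixed radius $r_*$, giving a $2\times2$ system in $(\varepsilon,\theta)$ solved by Brouwer's fixed point theorem, not a single scalar equation $\mathcal F(\theta,\lambda)=0$; the choice $r_*\sim\theta_*^{(1-\sigma)/(\alpha\sigma)}$ is part of that argument. These are refinements of execution rather than differences of approach, and your identification of the $|u|^{q-1}u$ term as the source of the $\tilde\theta_n^{\frac{1-\sigma}{\sigma}(\beta-q)}$ error is exactly right.
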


\begin{remark}
The condition $p<p_{\rm JL}$ is crucial for oscillating behavior of the graph of the eigenvalue versus the initial data \eqref{Lambda-theta}. This will not be the case when $p>p_{\rm JL}$. More precisely, as in \cite{BFPS}, we believe that when $p\geq p_{JL}$ the behavior of the eigenvalue graph should be monotone. This can be implicitly derived from the proof of Lemma \ref{lem:ker-L}. The oscillatory behavior is due to $\left(d-2\right)^2 -4 pA^{p-1} <0$ when $p<p_{JL}$. This  behavior is a consequence of the behavior near the origin of the functions in the kernel of $\mathcal{L}$ (see \eqref{behv:origin} for more details). Those functions are monotone when $\left(d-2\right)^2 -4 pA^{p-1}\geq 0$. As in \cite{BFPS} the uniqueness of positive radial solution should hold when the graph of the eigenvalue parameter versus the initial data is monotone i.e. when $p\geq p_{JL}$. We do not cover this case in our paper.
\end{remark}

In \cite{BFPS} Pelinovsky et al showed this result without the perturbation term $|u|^{q-1}u$. We believe that their approach is different and does not apply to our case \cite{ABSA, MAL}.  Our interest in considering \eqref{SFE-1} is motivated by many facts. First, mixed power nonlinearities arise naturally in the theory of Bose-Einstein condensates and nonlinear optics. Secondly, as \cite{BFPS} addressed the pure nonlinearity case, we wanted to investigate the validity of their results if we perturb the equation with a subcritical, critical, or supercritical nonlinearity.  Our method is self-contained and seems simpler and covers their case. In addition, in \cite{BFPS} Pelinovsky et al uses two assumptions, which were checked numerically, in order to guarantee that each $\lambda_*$ is isolated. Our argument by pass this technical assumption and our proof works also for a continuous set of $\lambda_*$. Lastly, combined ( or mixed) power nonlinearities is a fast-growing field due to its numerous applications, \cite{J, L, S1, S2, S}, to only list a few papers in the last months. In the presence of a Harmonic potential, Kikuchi \cite{Kikuchi} considered sub-critical energy nonlinearities with opposite signs. He proved the existence of standing waves. However, so far, there are no results for super-critical nonlinearities. Therefore, we wanted to open a door.

The strategy relies on the matching asymptotic argument. Following the idea in \cite{BMR}, we aim at finding directly the relation \eqref{Lambda-theta} such that we can glue the outgoing solution at $r \rightarrow+\infty$ which is a small deformation of the singular solution, with the rescaled smooth solution which bifurcates from of the Emden-Fowler solution. The matching asymptotic strategy was implemented to construct special solutions like ground states (see \cite{budnor, DGW}) or self-similar profiles for the energy supercritical nonlinear heat equation(see \cite{CRS}). However, the implementation is not straightforward and it is in fact challenging. More precisely, we aim to construct the exterior solution $u_{\text{ext}}$ by perturbing the singular solution $\Phi$ but they correspond to two different eigenvalues $\lambda$ and $\lambda_*$ respectively. Since those eigenvalues have an impact on the asymptotic behavior of the corresponding eigenfunction at infinity, $u_{\text{ext}}$ cannot be obtained by simply perturbing $\Phi$. Instead, we work on $v_{\text{ext}}:=u_{\text{ext}}r^{-\frac{\varepsilon}{2}}$ where $\varepsilon:= \lambda-\lambda_*$. This correction removes the problem of the asymptotic at infinity and $\Phi$ as well as $v_{\text{ext}}$ corresponds to comparable eigenvalue problems. Although, it brings another difficulty while linearizing. In fact, we get a non-homogeneous term in the linearized equation which is unusual situation in the literature. We showed that this new term can be treated as an error comparing to $\Phi$ by carrefully analyzing all its component. The construction of the interior solution follows similar to \cite{budnor, DGW}. However, we should be careful with the impact of the additional nonlinearity $|u|^{q-1}u$. Also, the matching is similar to \cite{budnor, DGW}. Since the initial and the eigenvalue are arbitrary taken, the gluing will force the relation \eqref{Lambda-theta}.

Note that, the constructions of the exterior and interior solutions hold for any $q$ less than $p$ but the matching requires $q<\frac{p+1}{2}$ in order to keep the second therm in the asymptotic of $\Phi$ small during the gluing process.

This approach has an advantage of making the leading terms and the matching procedure appear more naturally and explicitly. It also allows a flexible functional framework in order to treat separately the neighborhood of the origin and the neighborhood of $\infty .$

In the second section, we start with constructing a set of exterior solutions. We first study some linear problems that are helpful for the construction argument. The set of exterior solutions is then obtained by a fixed point argument. In the third section, we construct a set of interior solutions that bifurcates from the solution to the Emden-Fowler equation. For that, we use a contraction mapping argument. The last section is dedicated to the matching argument using the Brower fixed point theorem. In the appendix, we sketch the proof of Theorem \ref{Thm-sing} and we investigate the behavior of the singular solution to \eqref{ODE-pq} at infinity.

\section{Exterior solution}
In this section, we are going to construct a solution which bifurcates from the singular solution $\Phi$. We denote by $\mathcal{L}$ the linearized operator around the singular solution $\Phi$ which is given by
\begin{equation}
\label{def-L}
\mathcal{L}:= -\partial_{rr}-\frac{d-1}{r}\partial_r +(r^2-\lambda_*)-p\Phi^{p-1}-q\Phi^{q-1}.
\end{equation}
In the following lemma, we give the behavior of the fundamental basis of its kernel.

\begin{lemma} \label{lem:ker-L}
There exists $\psi_1$ in the kernel of $\mathcal{L}$ with the following asymptotic behaviors:
\begin{equation}
\label{behv:infty}
\psi_1(r)= K e^{-\frac{r^2}{2}} r^{\frac{\lambda_*-d}{2}} \left(1+O\left(\frac{1}{r^2}\right)\right), \,\,  \,\, \psi_1'(r)= -K r e^{-\frac{r^2}{2}} r^{\frac{\lambda_*-d}{2}} \left(1+O\left(\frac{1}{r^2}\right)\right)
\end{equation}
as $r\to \infty$,
\begin{equation}\label{behv:origin}
\psi_1(r)= \frac{c_1\sin\left(\omega\log(r)+c_2\right)}{r^{\frac{d-2}{2}}}\left(1+O\left(r^{\frac{2(p-q))}{p-1}}\right)\right),\,\, \rm{as} \,\, r\to 0 ,
\end{equation}
and
\begin{equation}\label{behv:origin-deriv}
\psi_1'(r)= \left(\frac{\omega c_1\cos\left(\omega\log(r)+c_2\right)}{r^{\frac{d}{2}}}-\frac{d-2}{2}\frac{c_1\sin\left(\omega\log(r)+c_2\right)}{r^{\frac{d}{2}}}\right)\left(1+O\left(r^{\frac{2(p-q))}{p-1}}\right)\right),
\end{equation}
$\rm{as} \,\, r\to 0$ where $ \omega:=\sqrt{|\left(d-2\right)^2 -4 pA^{p-1}}|$, $c_1\neq 0$ and $c_2\in\mathbb{R}$.
Moreover, $\psi_2$, the second function in the the fundamental basis admits the following behaviors:
\begin{equation}
\label{behv:infty-2}
\psi_2=\frac{-1}{2K}  \frac{e^{\frac{r^2}{2}}}{r^{\frac{\lambda_*+d}{2}}}\left(1+O\left(\frac{1}{r^2}\right)\right)
\end{equation}
as $r\to \infty$ and
\begin{equation}\label{behv:origin-2}
\psi_2= \frac{c_3\sin\left(\omega\log(r)+c_4\right)}{r^{\frac{d-2}{2}}}\left(1+O\left(r^{\frac{2(p-q))}{p-1}}\right)\right)
\end{equation}
as $r\to 0$, where $ c_3\neq 0$ and $c_4\in\mathbb{R}$.
\end{lemma}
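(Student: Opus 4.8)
The plan is to treat $\mathcal{L}\psi=0$ as a linear second–order ODE on $(0,\infty)$ and to determine its two–dimensional solution space by asymptotic analysis at the two singular ends: an irregular singular point at $r=\infty$ (coming from the $r^2$ term) and a regular singular point at $r=0$ (because $\Phi^{p-1}\sim A^{p-1}r^{-2}$ generates an inverse–square potential). Throughout I would feed in the origin behaviour of $\Phi$ from Theorem \ref{Thm-sing}, namely $\Phi^{p-1}=A^{p-1}r^{-2}\bigl(1+\mathcal{O}(r^{2(p-q)/(p-1)})\bigr)$ and $\Phi^{q-1}=\mathcal{O}(r^{-2(q-1)/(p-1)})$, together with the exponential decay of $\Phi$ at infinity recorded in the appendix, which renders the potential terms $p\Phi^{p-1}+q\Phi^{q-1}$ negligible against $r^2$ as $r\to\infty$.

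First I would analyse the behaviour at infinity. There the equation is a small perturbation of the harmonic–oscillator equation $\psi''+\frac{d-1}{r}\psi'-(r^2-\lambda_*)\psi=0$, and a dominant–balance (Liouville–Green / WKB) computation produces the two exponents $e^{-r^2/2}r^{(\lambda_*-d)/2}$ and $e^{r^2/2}r^{-(\lambda_*+d)/2}$. To make this rigorous I would substitute $\psi=e^{-r^2/2}r^{(\lambda_*-d)/2}\zeta$, derive the equation for $\zeta$, and solve it by a fixed–point (Levinson–type asymptotic integration) argument on $[r_0,\infty)$ under the recessive boundary condition, obtaining $\zeta=1+\mathcal{O}(1/r^2)$; differentiating gives the companion estimate for $\psi'$. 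This identifies $\psi_1$ as the (unique up to scale) decaying solution and fixes its constant $K$ as in \eqref{behv:infty}. The same substitution with $e^{+r^2/2}r^{-(\lambda_*+d)/2}$ yields a second, growing, solution, giving \eqref{behv:infty-2} up to a constant still to be determined.

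Next I would analyse the behaviour at the origin. The leading balance is the Euler equation $\psi''+\frac{d-1}{r}\psi'+pA^{p-1}r^{-2}\psi=0$; the change of variable $s=\log r$ turns it into the constant–coefficient equation $\ddot y+(d-2)\dot y+pA^{p-1}y=0$, whose characteristic equation $m^2+(d-2)m+pA^{p-1}=0$ has complex–conjugate roots with real part $-\tfrac{d-2}{2}$ precisely when $(d-2)^2-4pA^{p-1}<0$, i.e. when $p<p_{\rm JL}$. This produces the oscillatory leading profile $r^{-(d-2)/2}\sin(\omega\log r+\text{phase})$, with frequency $\omega=\sqrt{|(d-2)^2-4pA^{p-1}|}$ read off from the exponents. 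The remaining terms — the $\mathcal{O}(r^{2(p-q)/(p-1)})$ relative correction of $\Phi^{p-1}$, the lower–order term $q\Phi^{q-1}$ (which relative to $r^{-2}$ is $\mathcal{O}(r^{2(p-q)/(p-1)})$), and $(r^2-\lambda_*)$ — are all subdominant, the largest being of relative size $r^{2(p-q)/(p-1)}$ (note $2(p-q)/(p-1)<2$ since $q>1$). Treating them as a perturbation and running a variation–of–parameters / contraction argument on $(0,r_0]$ yields the factor $\bigl(1+\mathcal{O}(r^{2(p-q)/(p-1)})\bigr)$ in \eqref{behv:origin}–\eqref{behv:origin-2}. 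Since every nontrivial solution of the constant–coefficient leading equation has nonzero amplitude, the amplitudes $c_1$ and $c_3$ are nonzero; applying this to $\psi_1$ and to the second solution $\psi_2$ gives their respective phases $c_2,c_4$, and differentiating the expansions gives \eqref{behv:origin-deriv}.

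Finally I would fix the normalisation of $\psi_2$ through the Wronskian. Abel's identity for $\mathcal{L}$ gives $\bigl(r^{d-1}W[\psi_1,\psi_2]\bigr)'=0$, so $W[\psi_1,\psi_2]=c\,r^{-(d-1)}$; evaluating $W$ at infinity with the known asymptotics of $\psi_1$ and the trial form $B\,e^{r^2/2}r^{-(\lambda_*+d)/2}$ forces $2KB=c$, and normalising $\psi_2$ by $c=-1$ gives $B=-\tfrac{1}{2K}$, which is exactly \eqref{behv:infty-2}. The main obstacle I anticipate is the rigorous error control at the irregular singular point $r=\infty$, where the $e^{\pm r^2/2}$ dichotomy must be established with sharp $\mathcal{O}(1/r^2)$ remainders, together with the delicate perturbation bookkeeping at the origin needed to isolate exactly the exponent $2(p-q)/(p-1)$ (rather than a coarser error) and to confirm that the subcritical term $q\Phi^{q-1}$ does not disturb the leading oscillation; one must also verify that the single global solution $\psi_1$, selected by its decay at infinity, indeed carries a nonzero oscillation amplitude at the origin.
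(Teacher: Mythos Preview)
Your proposal is correct and, at the origin, essentially identical to the paper's proof: the paper also isolates the Euler equation $\phi''+\frac{d-1}{r}\phi'+pA^{p-1}r^{-2}\phi=0$, writes its fundamental solutions $r^{-(d-2)/2}\sin(\omega\log r)$, $r^{-(d-2)/2}\cos(\omega\log r)$, expands the remainder $f(r)=-\lambda_*+r^2-p(\Phi^{p-1}-A^{p-1}r^{-2})-q\Phi^{q-1}$ to extract the $r^{-2}\boO(r^{2(p-q)/(p-1)})$ bound, and closes by variation of constants and a Banach fixed point in the weighted norm $\sup_{r<r_0}r^{(d-2)/2}|\cdot|$.

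The one genuine methodological difference is at infinity. You propose the direct Liouville--Green substitution $\psi=e^{-r^2/2}r^{(\lambda_*-d)/2}\zeta$ followed by a Levinson-type contraction for $\zeta$; the paper instead first argues by contradiction that $\psi_1$ is eventually of one sign (if $\psi_1(R)<0$, $\psi_1'(R)=0$, $\psi_1''(R)>0$, the equation forces $\psi_1''(R)<0$), and then simply invokes the technique of Lemma~\ref{Lem:behav-Phi-infrty}, i.e.\ the change of variable $\psi=e^{r^2/2}\Psi$ together with repeated L'H\^opital on the logarithmic derivative $Z=\Psi'/\Psi$. Your route is more self-contained and yields the $\boO(1/r^2)$ remainder in one pass; the paper's route is more economical since it recycles an argument already written out for $\Phi$. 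For $\psi_2$ the paper uses the reduction-of-order formula $\psi_2=\psi_1\int W/\psi_1^2$ with $W=r^{-(d-1)}$, which is equivalent to your Abel/Wronskian normalisation. Your closing caveat about $c_1\neq0$ is well taken: the paper does not spell this out, but it follows because the fixed point with $a_{1,0}=a_{2,0}=0$ is identically zero.
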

\begin{proof}
\textbf{Step 1:} \underline{Behavior at infinity}.
The proof is similar to the one of Lemma \eqref{Lem:behav-Phi-infrty}. We only need to justify that $\psi_1$, a solution to $\mathcal{L}(\psi_1)=0$, does not change sign for $r$ sufficiently large. More precisely, if $\psi_1$ oscillates for $R$ large, then there exists $R$ such that $\psi_1(R)<0$, $\psi_1'(R)=0$ and $\psi_1''(R)>0$. This way, from the equation $\mathcal{L}(\psi_1)=0$, we have
$\psi_1''(R)= (R^2-\lambda_*-p\Phi^{p-1}(R)-q\Phi^{q-1}(R))\psi_1(R)<0$ which is a contradiction. This means that we can take $\psi_1$ a positive (for $r$ large) solution to $\mathcal{L}(\psi_1)=0$ which decays to zero when $r$ goes to infinity. This is sufficient to obtain \eqref{behv:infty}.

\textbf{Step 2:} \underline{Behavior near the origin}.
In this subsection, we take $r$ in a neighbourhood of $0$. We consider the following second order ODE
\begin{align}
\label{ODE-1}
\phi''(r)+\frac{d-1}{r}\phi'(r)+ p\frac{A^{p-1}}{r^2} \phi(r)=0,
\end{align}
The fundamental basis of solutions to the homogeneous equation associated to \eqref{ODE-1} is given by
\begin{equation}
\label{phi1-phi2}
\phi_1(r)= \frac{\sin(\omega \log(r))}{r^{\frac{d-2}{2}}} \quad {\rm and} \quad  \phi_2(r)=\frac{\cos(\omega \log(r))}{r^{\frac{d-2}{2}}},
\end{equation}
with $ \omega:=\sqrt{|\left(d-2\right)^2 -4 pA^{p-1}}|$.
Note that, the Wronskian\footnote{We recall that the Wronskian of two differentiable functions $f$ and $g$ is defined by $\boW(f,g):=f'g-g'f$.} of $\phi_1$ and $\phi_2$ is given by
\begin{equation}
\label{Wronskian:ODE1}
\boW(r)= \omega r^{-(d-1)}.
\end{equation}

Using the variation of constants, $\psi_1$ is given by
$$\psi_1(r)=\left(a_{1,0}+\omega\int_{0}^{r} f \psi_1 \phi_{2} s^{d-1} d s\right) \phi_{1}+\left(a_{2,0}-\omega\int_{0}^{r} f \psi_1 \phi_{1} s^{d-1} d s \right) \phi_{2}
$$
where
\begin{equation}
\label{def:f}
f(r):=-\lambda_* +r^2 -p\left(\Phi^{p-1} -  \frac{A^{p-1}}{r^2} \right)-q\Phi^{q-1}.
\end{equation}

We write

$$\psi_1 = a_{1,0} \phi_1 + a_{2,0} \phi_2 + \omega \tilde{\phi}, \quad \tilde{\phi}= \boF\left(\tilde{\phi}\right)$$
where
\begin{align*}
\boF\left(\tilde{\phi}\right) =& \int_{0}^{r} f \left( a_{1,0} \phi_1 + a_{2,0} \phi_2 + \tilde{\phi} \right) \phi_{2} s^{d-1} d s \phi_1\\
&-\int_{0}^{r} f \left( a_{1,0} \phi_1 + a_{2,0} \phi_2 + \tilde{\phi} \right)  \phi_{1} s^{d-1} d s  \phi_{2}
\end{align*}

From \eqref{behav:phi:origin}, we have

$$f(r)=-\lambda_* +r^2 -  \frac{A^{p-1}}{r^2} \left(\left\{1+\boO\left(r^{\frac{2(p-q))}{p-1}}\right)\right\}^{p-1} - 1  \right) - \boO\left(r^{-\frac{2(q-1)}{p-1}}\right).$$

Since $r^{\frac{2(p-q)}{p-1}} \to 0$ as $r\to 0$,  we write

\begin{align*}
\left\{1+\boO\left(r^{\frac{2(p-q))}{p-1}}\right)\right\}^{p-1} - 1 &= \exp\left((p-1)\ln\left\{1+\boO\left(r^{\frac{2(p-q))}{p-1}}\right)\right\} \right) - 1\\
& = \sum_{k\geq1} \boO\left(r^{\frac{2k(p-q))}{p-1}}\right)\\
& = \boO\left(r^{\frac{2(p-q))}{p-1}}\right)
\end{align*}

So that,
$$f(r)=-\lambda_* +r^2 -  r^{-2}  \boO\left(r^{\frac{2(p-q))}{p-1}}\right) .$$


Hence,

\begin{align*}
r^{\frac{d-2}{2}}\left|\boF\left(\tilde{\phi}\right) \right| \lesssim & \int_{0}^{r} s^{\frac{2(p-q))}{p-1}-1} ds + \int_{0}^{r} s^{\frac{d}{2}} |\tilde{\phi}| ds\\
\lesssim & r^{\frac{2(p-q))}{p-1}} + \int_{0}^{r} s^{\frac{d}{2}} |\tilde{\phi}| ds
\end{align*}

Thus, let $r_0>0$ small enough, the Banach fixed point theorem applies in the space equipped with the norm

$$\sup_{r<r_0} r^{\frac{d-2}{2}} |\tilde{\phi}|(r). $$

For $\psi_2$, we use the reduction of order formula
$$\psi_2=\int \frac{\boW(s)}{\psi_1^2(s)}ds \ \psi_1 $$
where $\boW(r)$ is the Wronskian of $\psi_1$ and $\psi_2$. It is is given by
\begin{equation}
\label{Wronskian:ODEpsi}
\boW(r)=r^{-(d-1)}.
\end{equation}
This finishes the proof of this lemma.
\end{proof}

\begin{remark}
The condition $p<p_{\rm JL}$ is very important to get \eqref{phi1-phi2}. In this case the quadratic formula of the characteristic equation to \eqref{ODE-1} is negative i.e. $\left(d-2\right)^2 -4 pA^{p-1} <0$. 
\end{remark}

Let $\varepsilon$ be an arbitrary number in a neighborhood of $0$. We set
\begin{equation}
\label{def:tilde-d-lambda}
\tilde{d}=d+\varepsilon \quad \texttt{and} \quad \lambda=\lambda_*+\varepsilon.
\end{equation}
  We shall study the behavior of $\tilde{\psi}_1$ and $\tilde{\psi}_2$, the functions in the fundamental set of solutions to
\begin{equation}
\label{ODE-psi-tilde}
\partial_{rr}\tilde{\psi}+\frac{\tilde{d}-1}{r} \partial_{r}\tilde{\psi} -( r^2 -\lambda ) \tilde{\psi} + p \Phi^{p-1}\tilde{\psi}+ q \Phi^{q-1}\tilde{\psi}=0, \quad r>0.
\end{equation}
Following the strategy of Lemma \ref{lem:ker-L}, we have the following result.
\begin{lemma}
There exists two solutions $\tilde{\psi}_1$ and $\tilde{\psi}_2$ of \eqref{ODE-psi-tilde} with the following asymptotic behaviors:
\begin{equation}
\label{behv:infty-tilde}
\tilde{\psi}_1= K e^{-\frac{r^2}{2}} r^{\frac{\lambda_*-d}{2}} \left(1+O\left(\frac{1}{r^2}\right)\right), \,\,\tilde{\psi}_2=\frac{-1}{2K}  \frac{e^{\frac{r^2}{2}}}{r^{\frac{\lambda+\tilde{d}}{2}}}\left(1+O\left(\frac{1}{r^2}\right)\right)
\end{equation}
as $r\to \infty$ and
\begin{align}\label{behv:origin-tilde}
\tilde{\psi}_1&= \frac{\tilde{c}_1\sin\left(\tilde{\omega}\log(r)+\tilde{c}_2\right)}{r^{\frac{\tilde{d}-2}{2}}}\left(1+O\left(r^{\frac{2(p-q))}{p-1}}\right)\right),\\
\tilde{\psi}_2&= \frac{\tilde{c}_3\sin\left(\tilde{\omega}\log(r)+\tilde{c}_4\right)}{r^{\frac{\tilde{d}-2}{2}}}\left(1+O\left(r^{\frac{2(p-q))}{p-1}}\right)\right) \nonumber
\end{align}
as $r\to 0$, where $ \tilde{\omega}=\sqrt{|\left(\tilde{d}-2\right)^2 -4 pA^{p-1}}|$, $\tilde{c}_1, \tilde{c}_3\neq 0$ and $\tilde{c}_2, \tilde{c}_4\in\mathbb{R}$.
\end{lemma}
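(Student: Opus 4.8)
The plan is to observe that equation \eqref{ODE-psi-tilde} is, up to the substitution $(d,\lambda_*)\mapsto(\tilde{d},\lambda)$, structurally identical to the kernel equation $\mathcal{L}(\psi)=0$ treated in Lemma~\ref{lem:ker-L}: the first-order coefficient becomes $\frac{\tilde{d}-1}{r}$ and the linear part becomes $-(r^2-\lambda)$, while the singular weights $p\Phi^{p-1}$ and $q\Phi^{q-1}$ are left untouched, because $\Phi$ remains the singular solution attached to $\lambda_*$ and dimension $d$. Consequently I would reproduce the two-step analysis of Lemma~\ref{lem:ker-L} almost verbatim, carefully tracking how the two shifts propagate into the exponents at infinity and into the frequency at the origin.

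First, for the behavior at infinity, I would use that $p\Phi^{p-1}+q\Phi^{q-1}$ decays like a Gaussian and is therefore negligible against $r^2$. The leading balance $\partial_{rr}\tilde{\psi}\simeq(r^2-\lambda)\tilde{\psi}$ forces the two branches $e^{\pm r^2/2}r^{s_\pm}$; inserting the ansatz and matching the $O(r^{s})$ terms yields $s_-=\frac{\lambda-\tilde{d}}{2}$ for the decaying branch and $s_+=-\frac{\lambda+\tilde{d}}{2}$ for the growing one. The decisive point is that, by the matched definitions \eqref{def:tilde-d-lambda}, $\lambda-\tilde{d}=\lambda_*-d$, so $\tilde{\psi}_1$ carries exactly the same power $\frac{\lambda_*-d}{2}$ as $\psi_1$; this is the very cancellation motivating the $\varepsilon$-shift. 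To upgrade the formal balance to a genuine asymptotic I would reuse the non-oscillation argument of Step~1 in Lemma~\ref{lem:ker-L}: a would-be negative minimum of the decaying solution at large $R$ contradicts $\tilde{\psi}''(R)=(R^2-\lambda-p\Phi^{p-1}-q\Phi^{q-1})\tilde{\psi}(R)<0$, after which the $(1+O(r^{-2}))$ corrections give \eqref{behv:infty-tilde}.

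Near the origin I would invoke $\Phi\sim A|x|^{-2/(p-1)}$ from \eqref{behav:phi:origin}, so that $p\Phi^{p-1}\sim pA^{p-1}r^{-2}$ dominates $q\Phi^{q-1}=O(r^{-2(q-1)/(p-1)})$, and the leading model is the Euler equation $\phi''+\frac{\tilde{d}-1}{r}\phi'+pA^{p-1}r^{-2}\phi=0$. Its fundamental system is the analogue of \eqref{phi1-phi2} with $d$ replaced by $\tilde{d}$, namely $r^{-(\tilde{d}-2)/2}\sin(\tilde{\omega}\log r)$ and $r^{-(\tilde{d}-2)/2}\cos(\tilde{\omega}\log r)$ with $\tilde{\omega}=\sqrt{|(\tilde{d}-2)^2-4pA^{p-1}|}$; since $(d-2)^2-4pA^{p-1}<0$ strictly under $p<p_{\rm JL}$, the same sign persists for all small $\varepsilon$ by continuity, so the oscillatory form survives. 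Writing $\tilde{\psi}_1$ by variation of constants about this basis, the remaining perturbation is $f(r)=-\lambda+r^2-p(\Phi^{p-1}-A^{p-1}r^{-2})-q\Phi^{q-1}$, which is \eqref{def:f} up to the harmless replacement $\lambda_*\to\lambda$; the estimate $f=-\lambda+r^2-r^{-2}O(r^{2(p-q)/(p-1)})$ then feeds the Banach fixed-point argument in the weighted norm $\sup_{r<r_0}r^{(\tilde{d}-2)/2}|\cdot|$ exactly as before, producing the factor $1+O(r^{2(p-q)/(p-1)})$ in \eqref{behv:origin-tilde}. The second solution $\tilde{\psi}_2$ is recovered by reduction of order, using that the Wronskian of \eqref{ODE-psi-tilde} equals $r^{-(\tilde{d}-1)}$.

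The step I expect to require the most care is the bookkeeping created by shifting $d\to\tilde{d}$ in the first-order coefficient while keeping the potential $\Phi$ built for dimension $d$ and eigenvalue $\lambda_*$: one must verify that this shift alters only the exponent $\frac{\tilde{d}-2}{2}$ and the frequency $\tilde{\omega}$ at the origin, and the exponents $s_\pm$ at infinity, without disturbing the leading singular coefficient $pA^{p-1}$ (unchanged because $\Phi$ is unchanged) nor the sign of the discriminant (preserved for small $\varepsilon$). The only genuinely new checks relative to Lemma~\ref{lem:ker-L} are the infinity cancellation $\lambda-\tilde{d}=\lambda_*-d$ and the confirmation that the $\tilde{d}$-dependent Wronskian and weighted norm leave every estimate of Step~2 intact; since neither introduces a new difficulty, the lemma indeed follows by the strategy of Lemma~\ref{lem:ker-L}.
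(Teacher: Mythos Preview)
Your proposal is correct and takes exactly the same approach as the paper, which simply states that the proof ``follows exactly as in the one of Lemma~\ref{lem:ker-L}.'' Your elaboration faithfully spells out that strategy---including the key observation that $\lambda-\tilde{d}=\lambda_*-d$ forces the decaying exponent at infinity to be unchanged, and that the Euler model at the origin acquires $\tilde{d}$ and $\tilde{\omega}$ in place of $d$ and $\omega$---and in fact provides more bookkeeping detail than the paper itself.
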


\begin{proof}
The proof follows exactly as in the one of Lemma \eqref{lem:ker-L}.

\end{proof}

Let $0<r_*<1$. We denote by $X_{r_*}$ the space of functions on $[r_*,+\infty)$ endowed with the following norm
$$\|w\|_{X_{r_*}}=\sup_{r_*\leq r\leq 1} \left\lbrace r^{\frac{d-2}{2}}|w|\right\rbrace +\sup_{r\geq 1}\left\{  e^{\frac{r^2}{2}} r^{2-\frac{\lambda_*-d}{2}}|w| \right\}.$$
We define the resolvent
\begin{equation}
\label{resol-L}
\tilde{\boT}(f) := \left(\int_r^{+\infty} f\tilde{\psi}_2 s^{\tilde{d}-1}ds\right)\tilde{\psi}_1- \left(\int_r^{+\infty} f\tilde{\psi}_1{s}^{\tilde{d}-1}ds\right)\tilde{\psi}_2,
\end{equation}
i.e.
$$\tilde{\mathcal{L}}(\tilde{\boT}(f))=f,$$
where $\tilde{\mathcal{L}}$ is the linearized operator around the singular solution $\Phi$, given by
\begin{equation}
\label{def-L-tilde}
\tilde{\mathcal{L}}:= -\partial_{rr}-\frac{\tilde{d}-1}{r}\partial_r +(r^2-\lambda)-p\Phi^{p-1}-q\Phi^{q-1}.
\end{equation}

We assume that
\begin{equation}
\label{cond-delta}
0<|\varepsilon| \ll \frac{1}{|\log(r_*)|}  .
\end{equation}
The following lemma states the continuity of the resolvent in the space $X_{r_*}$.
\begin{lemma}
\label{lem:cont-boT}
For any $f\in X_{r_*}$, we have
\begin{equation}
\label{cont-resolvent}
\|\tilde{\boT}(f)\|_{X_{r_*}}\lesssim \int_{r_*}^1 |f|{s}^{\frac{\tilde{d}}{2}}ds + \sup_{r\geq 1}\left\{  e^{\frac{r^2}{2}} r^{2-\frac{\lambda_*-d}{2}}|f| \right\}.
\end{equation}

\end{lemma}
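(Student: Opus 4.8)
The plan is to estimate $\tilde{\boT}(f)$ pointwise in the two regimes that define the norm $\|\cdot\|_{X_{r_*}}$, namely $r_*\le r\le 1$ and $r\ge 1$, feeding in the asymptotics \eqref{behv:infty-tilde} and \eqref{behv:origin-tilde}. Write $\tilde{\boT}(f)=J_2(r)\,\tilde{\psi}_1(r)-J_1(r)\,\tilde{\psi}_2(r)$ with $J_i(r):=\int_r^{+\infty}f\tilde{\psi}_i\,s^{\tilde d-1}\,ds$, and abbreviate the two pieces of the right-hand side of \eqref{cont-resolvent} by $I_0:=\int_{r_*}^1|f|s^{\tilde d/2}\,ds$ and $M_\infty:=\sup_{r\ge1}e^{r^2/2}r^{2-\frac{\lambda_*-d}{2}}|f|$. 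From \eqref{behv:origin-tilde} one has the harmless bound $|\tilde{\psi}_i(s)|\lesssim s^{-\frac{\tilde d-2}{2}}$ near $0$ (the oscillating sine factor only helps), hence $|\tilde{\psi}_i\,s^{\tilde d-1}|\lesssim s^{\tilde d/2}$ for $s\le1$; at infinity \eqref{behv:infty-tilde} gives $|\tilde{\psi}_1\,s^{\tilde d-1}|\lesssim e^{-s^2/2}s^{\frac{\lambda_*-d}{2}+\tilde d-1}$ and $|\tilde{\psi}_2\,s^{\tilde d-1}|\lesssim e^{s^2/2}s^{-\frac{\lambda+\tilde d}{2}+\tilde d-1}$.

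First I would treat the region $r\ge1$, where the whole integral $\int_r^\infty$ lives at infinity and $|f(s)|\le M_\infty e^{-s^2/2}s^{\frac{\lambda_*-d}{2}-2}$. The decisive point is that the substitutions $\lambda=\lambda_*+\varepsilon$, $\tilde d=d+\varepsilon$ make the $\varepsilon$'s cancel in the exponents: a direct count gives $|f\tilde{\psi}_2 s^{\tilde d-1}|\lesssim M_\infty s^{-3}$, so $|J_2(r)|\lesssim M_\infty r^{-2}$ and therefore $|J_2\tilde{\psi}_1|\lesssim M_\infty e^{-r^2/2}r^{\frac{\lambda_*-d}{2}-2}$, i.e. the first term exactly saturates the target weight. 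Likewise $|f\tilde{\psi}_1 s^{\tilde d-1}|\lesssim M_\infty e^{-s^2}s^{\lambda-3}$, whence the Gaussian tail $\int_r^\infty e^{-s^2}s^{\lambda-3}\,ds\lesssim e^{-r^2}r^{\lambda-4}$ yields $|J_1\tilde{\psi}_2|\lesssim M_\infty e^{-r^2/2}r^{\frac{\lambda_*-d}{2}-4}$, which is even better by a factor $r^{-2}$. Multiplying through by $e^{r^2/2}r^{2-\frac{\lambda_*-d}{2}}$ bounds both contributions by $M_\infty$ on $r\ge1$.

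Next I would handle $r_*\le r\le1$, splitting $J_i(r)=\int_r^1+\int_1^\infty$. The tail $\int_1^\infty$ is controlled exactly as above and contributes $\lesssim M_\infty$, while on $[r,1]$ the bound $|\tilde{\psi}_i\,s^{\tilde d-1}|\lesssim s^{\tilde d/2}$ gives $\big|\int_r^1 f\tilde{\psi}_i s^{\tilde d-1}\,ds\big|\lesssim\int_{r_*}^1|f|s^{\tilde d/2}\,ds=I_0$; hence $|J_i(r)|\lesssim I_0+M_\infty$. Multiplying by $|\tilde{\psi}_i(r)|\lesssim r^{-\frac{\tilde d-2}{2}}$ and by the norm weight $r^{\frac{d-2}{2}}$ leaves the factor $r^{\frac{d-\tilde d}{2}}=r^{-\varepsilon/2}$. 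Since $r\in[r_*,1]$ one has $\big|\tfrac{\varepsilon}{2}\log r\big|\le\tfrac{|\varepsilon|}{2}|\log r_*|$, which is $\ll1$ by the smallness assumption \eqref{cond-delta}, so $r^{-\varepsilon/2}\lesssim1$ uniformly and $r^{\frac{d-2}{2}}|\tilde{\boT}(f)|\lesssim I_0+M_\infty$ on $[r_*,1]$. Combining the two regions gives \eqref{cont-resolvent}.

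The main obstacle is the exponent bookkeeping coupled with the $d\neq\tilde d$ mismatch: the cancellation of $\varepsilon$ in the decay rates at infinity is what makes the products integrable against the exact target weights, and the leftover algebraic factor $r^{-\varepsilon/2}$ near the origin is precisely what forces the scaling condition \eqref{cond-delta}. One should also check that the oscillatory $\sin$ in \eqref{behv:origin-tilde} never spoils these bounds (it is bounded by $1$), and that the sub-leading $O(\cdot)$ corrections in \eqref{behv:infty-tilde}--\eqref{behv:origin-tilde} are genuinely lower order and do not affect the leading estimates.
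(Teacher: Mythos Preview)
Your proof is correct and follows essentially the same approach as the paper's: both split into the regimes $r\ge1$ and $r_*\le r\le1$, plug in the asymptotics \eqref{behv:infty-tilde}--\eqref{behv:origin-tilde}, obtain $|f\tilde\psi_2 s^{\tilde d-1}|\lesssim M_\infty s^{-3}$ and $|f\tilde\psi_1 s^{\tilde d-1}|\lesssim M_\infty e^{-s^2}s^{\lambda-3}$ at infinity, and absorb the residual $r^{-\varepsilon/2}$ near the origin via \eqref{cond-delta}. Your exponent bookkeeping (in particular the explicit check that the $\varepsilon$'s cancel in $(\lambda_*-d)/2-(\lambda+\tilde d)/2+\tilde d-1=-1$) is in fact cleaner than the paper's presentation.
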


\begin{proof}
First, we consider the case when $r\geq 1$. Using \eqref{def:tilde-d-lambda}, \eqref{behv:infty-tilde} and \eqref{resol-L}, we obtain
\begin{align*}
|\tilde{\boT}(f)|&\lesssim  \int_r^{+\infty} |f| e^{\frac{s^2}{2}} {s}^{-\frac{\lambda+\tilde{d}}{2}} s^{\tilde{d}-1} ds \, e^{-\frac{r^2}{2}} {r}^{\frac{\lambda_*-d}{2}} +\int_r^{+\infty} |f|e^{-\frac{{s}^2}{2}} {s}^{\frac{\lambda_*-d}{2}} {s}^{\tilde{d}-1}ds\, e^{\frac{r^2}{2}} {r}^{-\frac{\lambda+\tilde{d}}{2}}\\
& \lesssim  \int_r^{+\infty} |f| e^{\frac{s^2}{2}} {s}^{\frac{\tilde{d}-\lambda}{2}-1}  ds \, e^{-\frac{r^2}{2}} {r}^{\frac{\lambda_*-d}{2}} +\int_r^{+\infty} |f|e^{-\frac{{s}^2}{2}} {s}^{\frac{\lambda_*-d}{2}} {s}^{\tilde{d}-1}ds\, e^{\frac{r^2}{2}} {r}^{-\frac{\lambda+\tilde{d}}{2}} \\
&\lesssim \left\{\int_r^{+\infty}\frac{ds}{{s}^3} e^{-\frac{r^2}{2}} {r}^{\frac{\lambda_*-d}{2}} +e^{\frac{r^2}{2}}{r}^{-\frac{\lambda+\tilde{d}}{2}} \int_r^{+\infty} {s}^{\lambda-3}e^{-{s}^2}ds\right\}\sup_{r\geq 1}\left\{  e^{\frac{r^2}{2}} r^{2-\frac{\lambda_*-d}{2}}|f| \right\}\\
&\lesssim e^{-\frac{r^2}{2}} {r}^{\frac{\lambda_*-d}{2}-2} \sup_{r\geq 1}\left\{  e^{\frac{r^2}{2}} r^{2-\frac{\lambda_*-d}{2}}|f| \right\},
\end{align*}
where we have used in the last step that the function $s\mapsto {s}^{\lambda}e^{-{s}^2}$ is decreasing for $s$ large.

On the other hand, for $r_*\leq r\leq 1$, similarly we infer from \eqref{behv:infty-tilde}, \eqref{behv:origin-tilde} and \eqref{cond-delta}, that
\begin{align*}
r^{\frac{d-2}{2}}|\tilde{\boT}(f)|& \lesssim  r^{-\frac{\varepsilon}{2}} \left( \int_1^{+\infty}\frac{ds}{{s}^3} + \int_1^{+\infty} {s}^{\lambda-3}e^{-{s}^2}ds\right)  \sup_{r\geq 1}\left\{  e^{\frac{r^2}{2}} r^{2-\frac{\lambda-d}{2}}|f| \right\}\\
&\qquad +\int_r^1 |f|{s}^{\frac{\tilde{d}}{2}}ds\ r^{-\frac{\varepsilon}{2}} \\
&\lesssim \int_{r_*}^1 |f|{s}^{\frac{\tilde{d}}{2}}ds +  \sup_{r\geq 1}\left\{  e^{\frac{r^2}{2}} r^{2-\frac{\lambda-d}{2}}|f| \right\}.
\end{align*}
This finishes the proof of this lemma.
\end{proof}
%
%

\begin{lemma}
We denote by $\psi$ the decaying solution to the following non homogeneous ODE
$$\tilde{\mathcal{L}}(\psi)=  \frac{p-1}{2} \log(r)  \Phi^p + \frac{q-1}{2} \log(r)  \Phi^q + \left(\frac{\varepsilon}{4} + \frac{d}{2} -1 \right) \frac{\Phi}{r^2} -\left(\frac{\Phi'}{r}+\Phi\right).$$
Its asymptotic behavior is given by:
\begin{equation}
\label{behv:infty-psi}
\psi= K e^{-\frac{r^2}{2}} r^{\frac{\lambda_*-d}{2}} \left(1+O\left(\frac{1}{r^2}\right)\right),
\end{equation}
as $r\to \infty$ and \footnote{Here $\sigma :=\frac{d}{2}-\frac{2}{p-1}$ is the critical sobolev exponent.}
\begin{equation}\label{behv:origin-psi}
\psi = \frac{\tilde{c}_1\sin\left(\tilde{\omega}\log(r)+\tilde{c}_2\right)}{r^{\frac{\tilde{d}-2}{2}}}\left(1+O\left( |\log(r)|  r^{\sigma -1}\right)+O\left(  r^{\sigma -1}\right)\right),
\end{equation}
as $r\to 0$, where $ \tilde{\omega}=\sqrt{|\left(\tilde{d}-2\right)^2 -4 pA^{p-1}}|$, $\tilde{c}_1, \tilde{c}_3\neq 0$ and $\tilde{c}_2, \tilde{c}_4\in\mathbb{R}$.

\end{lemma}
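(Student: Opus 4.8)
The plan is to realise $\psi$ as $\psi=\tilde{\psi}_1+\tilde{\boT}(F)$, where $F$ denotes the right-hand side of the equation and $\tilde{\boT}$ is the resolvent \eqref{resol-L}, and then to read off both expansions from this representation. Since $\tilde{\mathcal{L}}(\tilde{\psi}_1)=0$ and $\tilde{\mathcal{L}}(\tilde{\boT}(F))=F$, this $\psi$ indeed solves the stated equation. The preliminary task is to quantify $F$ at the two ends. Near the origin \eqref{behav:phi:origin}--\eqref{behav:deriv-phi:origin} give $\Phi\sim A r^{-2/(p-1)}$, $\Phi/r^2\sim A r^{-2/(p-1)-2}$, $\Phi'/r\sim -\tfrac{2A}{p-1}r^{-2/(p-1)-2}$ and $\Phi^p\sim A^p r^{-2p/(p-1)}$; since $-\tfrac{2}{p-1}-2=-\tfrac{2p}{p-1}$, every term of $F$ except the $\Phi^q$ one carries the same power $r^{-2p/(p-1)}$, and the logarithmic term $\tfrac{p-1}{2}\log(r)\,\Phi^p$ dominates by a factor $\log r$. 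At infinity I would use the Gaussian asymptotics of $\Phi$ (the same form as $\tilde{\psi}_1$ in \eqref{behv:infty-tilde}); the crucial observation is that $\Phi'/r+\Phi$ is arranged so that the two leading Gaussians cancel, leaving $\Phi'/r+\Phi=O\!\left(e^{-r^2/2}r^{(\lambda_*-d)/2-2}\right)$, exactly the decay measured by the weight of $X_{r_*}$ at infinity, the remaining terms of $F$ being of this order or smaller.

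For \eqref{behv:infty-psi} I would feed this into Lemma \ref{lem:cont-boT}: the weighted supremum of $F$ over $r\ge 1$ is finite by the previous step, so the $r\ge1$ computation in the proof of \eqref{cont-resolvent} gives $|\tilde{\boT}(F)|\lesssim e^{-r^2/2}r^{(\lambda_*-d)/2-2}$. Thus $\tilde{\boT}(F)$ is smaller than $\tilde{\psi}_1$ by a factor $r^{-2}$, the leading behaviour of $\psi$ at infinity is governed by $\tilde{\psi}_1$, and one reads off \eqref{behv:infty-psi} with relative error $O(1/r^2)$.

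For \eqref{behv:origin-psi} I would abandon the global resolvent and instead run the variation-of-constants scheme of Step 2 of Lemma \ref{lem:ker-L}, now against the basis $\phi_1,\phi_2$ of \eqref{phi1-phi2} with $d$ replaced by $\tilde{d}$. Writing the equation as $-\psi''-\tfrac{\tilde d-1}{r}\psi'-\tfrac{pA^{p-1}}{r^2}\psi=F+\hat f\,\psi$, with $\hat f$ the potential perturbation of \eqref{def:f}, the homogeneous data together with the $\hat f\psi$ loop reproduce, exactly as for $\tilde{\psi}_1$, the oscillatory leading term $\tilde c_1\sin(\tilde\omega\log r+\tilde c_2)\,r^{-(\tilde d-2)/2}$. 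The genuine source $F$ enters through the integrals $\int_0^r F\,\phi_j\,s^{\tilde d-1}\,ds$: for the dominant piece $\tfrac{p-1}{2}\log(r)\,\Phi^p$ the integrand behaves like $\log(s)\,s^{\sigma-2}$, because $-\tfrac{2p}{p-1}+\tfrac{\tilde d}{2}=\sigma-2+O(\varepsilon)$, and since $p>\tfrac{d+2}{d-2}$ forces $\tfrac{2}{p-1}<\tfrac{d-2}{2}$, i.e. $\sigma>1$, this converges at $0$ and equals $O(\log(r)\,r^{\sigma-1})$. Dividing by the homogeneous scale $r^{-(\tilde d-2)/2}$ turns it into the relative correction $O(|\log r|\,r^{\sigma-1})$, while the non-logarithmic terms $\Phi/r^2$ and $\Phi'/r$ give $O(r^{\sigma-1})$; collecting these, together with the intrinsic $O(r^{2(p-q)/(p-1)})$ error carried by the homogeneous part, yields \eqref{behv:origin-psi}.

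The routine parts are the two power-countings at the ends. The real work I expect to be twofold. First, one must verify the cancellation of the leading Gaussians in $\Phi'/r+\Phi$ at infinity to one order beyond the naive estimate, as this is precisely what makes $F$ admissible for Lemma \ref{lem:cont-boT}. Second, and the main obstacle, is the honest bookkeeping of the logarithmic source near the origin: one must show that $\log(r)\Phi^p$, rather than breaking the scheme, threads through the Volterra integral to produce exactly the stated $|\log r|\,r^{\sigma-1}$ correction, with $\sigma>1$ being the exact threshold guaranteeing convergence at $r=0$, and one must order this against the homogeneous error $O(r^{2(p-q)/(p-1)})$. Keeping the dependence on $\tilde d$ and $\tilde\omega$ uniform under \eqref{cond-delta} throughout is a further technical, but routine, ingredient.
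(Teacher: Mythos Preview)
Your proposal is correct and follows essentially the same approach as the paper: write $\psi=\tilde{\psi}_1+\tilde{\boT}(F)$ (the paper writes the general homogeneous combination $a_1\tilde{\psi}_1+b_2\tilde{\psi}_2$, but decay forces $b_2=0$ and the normalisation in \eqref{behv:infty-psi} fixes $a_1=1$) and estimate the resolvent at both ends, using in particular the cancellation \eqref{bound:infty:phi'/r+phi} for $\Phi'/r+\Phi$ at infinity and the power-count $\int_r^1 \log(s)\,s^{\sigma-2}\,ds=O(|\log r|\,r^{\sigma-1})$ near the origin. The only organisational difference is that the paper keeps the global resolvent $\tilde{\boT}$ in play near the origin as well, plugging in the already-established asymptotics \eqref{behv:origin-tilde} of $\tilde{\psi}_1,\tilde{\psi}_2$, rather than switching to the bare Euler basis $\phi_1,\phi_2$ with an explicit $\hat f\psi$ loop as you propose; this saves re-running the fixed point, but the estimates are otherwise line-for-line the ones you describe.
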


\begin{proof}
We write
$$ \psi= a_1 \tilde{\psi}_1 + b_2 \tilde{\psi}_2 + \tilde{\psi}$$
where
$$\tilde{\psi}=\tilde{\boT}\left(  \frac{p-1}{2}\log(r) \Phi^p + \frac{q-1}{2} \log(r)  \Phi^q  + \left(\frac{\varepsilon}{4} + \frac{d}{2} -1 \right) \frac{\Phi}{r^2} -\left(\frac{\Phi'}{r}+\Phi\right)\right).$$

For $r\geq 1$, from \eqref{behv:infty-tilde} and  \eqref{resol-L}, we have
\begin{align*}
|\tilde{\boT}( \log(r) \Phi^p )|&\lesssim  \int_r^{+\infty}  \log(s) \ e^{(1-p)\frac{s^2}{2}}  {s}^{p\frac{\lambda_*-d}{2}} {s}^{-\frac{\lambda+\tilde{d}}{2}} s^{\tilde{d}-1} ds \, e^{-\frac{r^2}{2}} {r}^{\frac{\lambda_*-d}{2}} \\
& \quad + \int_r^{+\infty} \log(s) \ e^{-(p+1)\frac{{s}^2}{2}} {s}^{(p+1)\frac{\lambda_*-d}{2}} {s}^{\tilde{d}-1}ds\, e^{\frac{r^2}{2}} {r}^{-\frac{\lambda+\tilde{d}}{2}}\\
&\lesssim \frac{1}{r^2} \tilde{\psi}_1
\end{align*}

\begin{align*}
|\tilde{\boT}( \log(r) \Phi^q )|&\lesssim  \int_r^{+\infty}  \log(s) \ e^{(1-q)\frac{s^2}{2}}  {s}^{q\frac{\lambda_*-d}{2}} {s}^{-\frac{\lambda+\tilde{d}}{2}} s^{\tilde{d}-1} ds \, e^{-\frac{r^2}{2}} {r}^{\frac{\lambda_*-d}{2}} \\
& \quad + \int_r^{+\infty} \log(s) \ e^{-(q+1)\frac{{s}^2}{2}} {s}^{(q+1)\frac{\lambda_*-d}{2}} {s}^{\tilde{d}-1}ds\, e^{\frac{r^2}{2}} {r}^{-\frac{\lambda+\tilde{d}}{2}}\\
&\lesssim \frac{1}{r^2} \tilde{\psi}_1
\end{align*}

and
\begin{align*}
\left|\tilde{\boT}\left(  \frac{\Phi}{r^2} \right)\right|+\left|\tilde{\boT}\left(  \frac{\Phi'}{r}+\Phi \right)\right|&\lesssim  \int_r^{+\infty}   s^{-3} ds \, e^{-\frac{r^2}{2}} {r}^{\frac{\lambda_*-d}{2}} + \int_r^{+\infty} e^{-{s}^2} {s}^{\lambda_*-d} {s}^{\tilde{d}-3}ds\, e^{\frac{r^2}{2}} {r}^{-\frac{\lambda+\tilde{d}}{2}}\\
&\lesssim \frac{1}{r^2} \tilde{\psi}_1 .
\end{align*}
Note that, in the last estimate we have used
\begin{equation}
\label{bound:infty:phi'/r+phi}
\left|\frac{\Phi'}{r}+\Phi\right| \lesssim \frac{1}{r^2} \left( e^{-\frac{r^2}{2}} r^{\frac{\lambda_*-d}{2}} \right) ,
\end{equation}

which is a direct consequence of \eqref{behv:infty:Phi}.
On the other hand, for $r_*<r<1$, similarly we infer from \eqref{behv:infty-tilde}, \eqref{behv:origin-tilde} and \eqref{cond-delta}, that
\begin{align*}
|\tilde{\boT}(\log(r) \  \Phi^p)|& \lesssim   \left( \int_1^{+\infty}\frac{ds}{{s}^3} + \int_1^{+\infty} {s}^{\lambda-3}e^{-{s}^2}ds\right) |\tilde{\psi}_1+\tilde{\psi}_2|\\
&\qquad +\int_{r}^1 |\log s|  {s}^{\sigma -2}ds |\tilde{\psi}_1+\tilde{\psi}_2|\\
&\lesssim \left(1+ |\log r| {r}^{\sigma -1}  + {r}^{\sigma -1}  \right)  |\tilde{\psi}_1+\tilde{\psi}_2|,
\end{align*}
and
\begin{align*}
|\tilde{\boT}(\log(r) \  \Phi^q)|& \lesssim   \left( \int_1^{+\infty}\frac{ds}{{s}^3} + \int_1^{+\infty} {s}^{\lambda-3}e^{-{s}^2}ds\right) |\tilde{\psi}_1+\tilde{\psi}_2|\\
&\qquad +\int_{r}^1 |\log s|  {s}^{\sigma -2} s^{\frac{2(p-q))}{p-1}}ds |\tilde{\psi}_1+\tilde{\psi}_2|\\
&\lesssim \left(1+ |\log r| {r}^{\sigma -1} r^{\frac{2(p-q))}{p-1}}+ {r}^{\sigma -1} r^{\frac{2(p-q))}{p-1}}\right)  |\tilde{\psi}_1+\tilde{\psi}_2|.
\end{align*}
Here we have used the following identity which implement integration by parts
\begin{align*}
(\alpha+1)\int_{r}^1 \log (s ) {s}^{\alpha}ds = \log(r) r^{\alpha+1} - \int_{r}^1 {s}^{\alpha}ds =  \log(r) r^{\alpha+1} - 1 +\frac{{r}^{\alpha}}{\alpha+1} , \quad \alpha>-1.
\end{align*}

Similarly, we have
\begin{align*}
\left|\tilde{\boT}\left(  \frac{\Phi}{r^2} \right)\right|+\left|\tilde{\boT}\left(  \frac{\Phi'}{r}+\Phi \right)\right|& \lesssim   \left( \int_1^{+\infty}\frac{ds}{{s}^3} + \int_1^{+\infty} {s}^{\lambda-3}e^{-{s}^2}ds\right) |\tilde{\psi}_1+\tilde{\psi}_2|\\
&\qquad +\int_{r}^1   {s}^{\sigma -2}ds |\tilde{\psi}_1+\tilde{\psi}_2|\\
&\lesssim \left(1+  {r}^{\sigma -1}\right)  |\tilde{\psi}_1+\tilde{\psi}_2|.
\end{align*}
This finishes the proof of this lemma.

\end{proof}

With all those lemmas in hand, we are able to construct a set of exterior solutions.
\begin{proposition}
\label{prop:ext-sol}
Let $0<r_*<1$ a small enough universal constant and $\varepsilon:=\lambda-\lambda_*$ be an arbitrary parameter satisfying
\begin{equation}
\label{est:r0-eps}
0<|\varepsilon|\ll  r_*^{\sigma-1} ,
\end{equation}
there exists a postive solution $u$ to \eqref{ODE-pq}, for $\lambda=\lambda_*+\varepsilon$, of the form
\begin{equation}
\label{decomp:ext}
u=r^{\frac{\varepsilon}{2}} \left( \Phi+\varepsilon(\psi+ w)\right)
\end{equation}
with the bound:
\begin{equation}
\label{bound:norm-w}
\|w\|_{X_{r_*}}+\left\| \inf\{1,r\}(1+r)^{-1} w'\right\|_{X_{r_*}}\lesssim \varepsilon r_*^{1-\sigma}.
\end{equation}
Moreover, we have $w_{|_{\varepsilon=0}}=0$ and
\begin{equation}
\label{estim-deriv-w-ep}
 \left\|\partial_\varepsilon w_{|_{\varepsilon=0}}\right\|_{X_{r_*}}\lesssim   r_*^{1-\sigma}.
\end{equation}

\end{proposition}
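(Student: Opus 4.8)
The plan is to turn the problem into a fixed-point equation for $w$ solved by the Banach contraction principle in $X_{r_*}$, using the resolvent $\tilde{\boT}$ and the continuity estimate of Lemma~\ref{lem:cont-boT}.

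I would first insert the ansatz \eqref{decomp:ext}, i.e. $u=r^{\varepsilon/2}v$ with $v=\Phi+\varepsilon(\psi+w)$, into \eqref{ODE-pq}. A direct computation gives that $v$ solves
\begin{equation*}
v'' + \frac{\tilde{d}-1}{r}v' - (r^2-\lambda)v + \frac{\varepsilon}{2}\Big(\frac{\varepsilon}{2}+d-2\Big)\frac{v}{r^2} + r^{\frac{\varepsilon(p-1)}{2}}v^p + r^{\frac{\varepsilon(q-1)}{2}}v^q = 0 ,
\end{equation*}
where $\tilde{d}=d+\varepsilon$ and the powers $r^{\varepsilon(p-1)/2}$, $r^{\varepsilon(q-1)/2}$ encode exactly the effect of the factor $r^{\varepsilon/2}$; this is the whole point of working with $v$ rather than $u$, since $v$ and $\Phi$ then solve comparable eigenvalue problems at infinity. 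Expanding around $\Phi$ and using that $\Phi$ solves \eqref{ODE-pq} for $\lambda=\lambda_*$ in dimension $d$, the terms of order $\varepsilon^0$ cancel. The terms of order $\varepsilon^1$ are precisely the right-hand side of the non-homogeneous equation defining $\psi$: the first variations of $r^{\varepsilon(p-1)/2}v^p$ and $r^{\varepsilon(q-1)/2}v^q$ give the logarithmic forcing $\frac{p-1}{2}\log(r)\,\Phi^p+\frac{q-1}{2}\log(r)\,\Phi^q$, the inverse-square weight gives $\big(\frac{\varepsilon}{4}+\frac{d}{2}-1\big)\frac{\Phi}{r^2}$, and the shift of dimension and eigenvalue produces the combination $\frac{\Phi'}{r}+\Phi$. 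By the choice of $\psi$ these cancel, and after dividing by $\varepsilon$ the equation collapses to
\begin{equation*}
\tilde{\mathcal{L}}(w)=\mathcal{R}(w),\qquad\text{equivalently}\qquad w=\tilde{\boT}\big(\mathcal{R}(w)\big),
\end{equation*}
where $\mathcal{R}(w)$ collects all genuinely higher-order contributions: the quadratic-and-higher Taylor remainders of $(\Phi+\varepsilon(\psi+w))^p$ and $(\Phi+\varepsilon(\psi+w))^q$, the quadratic remainder of the two power weights in $r$, and the $O(\varepsilon^2)$ part of the $v/r^2$ term. Each carries an explicit surplus factor of $\varepsilon$ after the division, so that $\mathcal{R}$ vanishes at $\varepsilon=0$.

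I would then run the contraction on the ball $\{\,\|w\|_{X_{r_*}}\lesssim \varepsilon r_*^{1-\sigma}\,\}$. The dominant piece of $\mathcal{R}$ is the quadratic nonlinear remainder $\sim\varepsilon\,\Phi^{p-2}(\psi+w)^2$ (the analogous $\Phi^{q-2}$ piece being smaller by a factor $r^{2(p-q)/(p-1)}$ since $q<p$). Using $\Phi\sim A|x|^{-2/(p-1)}$ and $\psi\sim r^{-(d-2)/2}$ near the origin and feeding this into Lemma~\ref{lem:cont-boT}, the weighted integral $\int_{r_*}^{1}|\mathcal{R}|\,s^{\tilde{d}/2}\,ds$ has integrand $\sim\varepsilon\,s^{-\sigma}$, and since $\sigma=\frac{d}{2}-\frac{2}{p-1}>1$ in the supercritical range this integrates to $\lesssim\varepsilon\,r_*^{1-\sigma}$; the contribution near infinity is exponentially small and already controlled. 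The same computation with one factor $\psi+w$ replaced by $w_1-w_2$ yields the Lipschitz bound $\|\mathcal{R}(w_1)-\mathcal{R}(w_2)\|_{X_{r_*}}\lesssim\varepsilon r_*^{1-\sigma}\|w_1-w_2\|_{X_{r_*}}$, so under \eqref{est:r0-eps} the map $\tilde{\boT}\circ\mathcal{R}$ is a contraction of that ball into itself and Banach's theorem gives a unique $w$ satisfying the first half of \eqref{bound:norm-w}. The derivative half is obtained by differentiating the representation \eqref{resol-L} under the integral sign; the weight $\inf\{1,r\}(1+r)^{-1}$ is tailored to absorb the one extra power of $r$ gained by $w'$ at both ends. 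Finally $w_{|_{\varepsilon=0}}=0$ and \eqref{estim-deriv-w-ep} follow by differentiating the identity $w=\tilde{\boT}(\mathcal{R}(w))$ in $\varepsilon$: since $\mathcal{R}$ is smooth in $\varepsilon$ and of size $O(\varepsilon)$, dropping the surplus $\varepsilon$ costs exactly one power, giving $\|\partial_\varepsilon w_{|_{\varepsilon=0}}\|_{X_{r_*}}\lesssim r_*^{1-\sigma}$. Positivity of $u$ holds because on $[r_*,\infty)$ the ratio $\varepsilon(\psi+w)/\Phi\sim\varepsilon r^{1-\sigma}$ is bounded by $\varepsilon r_*^{1-\sigma}\ll1$.

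The main obstacle I anticipate is not the contraction scheme itself but the bookkeeping that certifies every term of $\mathcal{R}$ — especially the non-standard non-homogeneous pieces created by the $r^{\varepsilon/2}$ correction and by the logarithmic factors — to be genuinely subordinate to the leading $\varepsilon r_*^{1-\sigma}$ term. Moreover these estimates must survive the many $r^{\pm\varepsilon}$ mismatches between the weights in dimension $d$ and $\tilde{d}$; this is where hypothesis \eqref{cond-delta}, $|\varepsilon|\,|\log(r_*)|\ll1$, is essential, since it makes $r^{\varepsilon/2}$ essentially constant on $[r_*,1]$ and must be invoked each time $\tilde{d}$ replaces $d$ in Lemma~\ref{lem:cont-boT}. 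A secondary technical point is the $w'$ estimate, where one must verify that differentiating $\tilde{\boT}$ does not spoil the gain in $r_*$, which is exactly why the weighted derivative norm is used in \eqref{bound:norm-w} rather than a plain one.
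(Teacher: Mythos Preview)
Your proposal is correct and follows essentially the same route as the paper: the substitution $v=r^{-\varepsilon/2}u$, the decomposition $v=\Phi+\varepsilon(\psi+w)$, the reduction to $\tilde{\mathcal{L}}(w)=\varepsilon F(\Phi,w,\varepsilon)$ and the Banach fixed point via Lemma~\ref{lem:cont-boT}, with the dominant contribution $\varepsilon\Phi^{p-2}(\psi+w)^2$ producing the $\varepsilon r_*^{1-\sigma}$ bound exactly as you describe. Your identification of the bookkeeping obstacles (the logarithmic forcing terms, the $r^{\pm\varepsilon}$ mismatches controlled via \eqref{cond-delta}, and the weighted derivative norm) is also accurate and matches what the paper does.
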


\begin{proof}
The proof relies on the Banach fixed point theorem. First, we denote by
$$v:=r^{-\frac{\varepsilon}{2}} u.$$
It is a solution to the following equation
\begin{equation}
\label{ODE:v}
v''+\frac{\tilde{d}-1}{r} v' -( r^2-\lambda ) v + r^{\varepsilon\frac{p-1}{2}}  |v|^{p-1}v +  r^{\varepsilon\frac{q-1}{2}}  |v|^{q-1}v =\frac{\varepsilon}{2} \left(\frac{\varepsilon}{2} + d -2 \right) \frac{v}{r^2}  , \quad r>r_*,
\end{equation}
where $\tilde{d}=d+\varepsilon$. It remains to construct $v$ of the form $v=\Phi+\varepsilon(\psi+ w )$. By definition, $v$ is positive. Additionally, from \eqref{est:r0-eps} and the fact that $\|w\|_{X_{r_*}}\leq 1$, we have
\begin{equation}
\label{pos-u}
 2\Phi \geq v \geq \Phi - \varepsilon |\psi+w| \geq \frac{1}{2} \Phi \textrm{ for }r\leq r_*.
\end{equation}
We plug the decomposition form of $v$ into \eqref{ODE:v}, we obtain that $w$  satisfies the following identity
$$\tilde{\mathcal{L}}(w)= \varepsilon F(\Phi,w,\varepsilon)\textrm{ on } [r_*,\infty)$$
where $\tilde{\mathcal{L}}$ is given by \eqref{def-L-tilde} and
\begin{align*}
F(\Phi,w,\varepsilon) = &\frac{1}{\varepsilon^2} \left((\Phi+\varepsilon (\psi+w) )^p-\Phi^p-p\Phi^{p-1}\varepsilon (\psi+w)  \right)\\
&+\frac{1}{\varepsilon^2} \left((\Phi+\varepsilon (\psi+w) )^q-\Phi^q-q\Phi^{q-1}\varepsilon (\psi+w)  \right)\\
&+ \frac{1}{\varepsilon^2}(1-r^{\varepsilon\frac{p-1}{2}} )  \left((\Phi+\varepsilon (\psi+w) )^p-\Phi^p \right)+\\
&+ \frac{1}{\varepsilon^2}(1-r^{\varepsilon\frac{p-1}{2}} + \frac{p-1}{2} \varepsilon \log r  )\Phi^p  - \left(\frac{\varepsilon}{4} + \frac{d}{2} -1 \right) \frac{\psi+w}{r^2}.
\end{align*}

We remark that $F(\Phi,w,\varepsilon)$ can be rewritten as
\begin{align}
\label{def2:F}
F(\Phi,w,\varepsilon) =& p(p-1)\left(\int_0^1(1-t)(\Phi+t\varepsilon (\psi+w) )^{p-2}dt\right) (\psi+w) ^2 \\
& + q(q-1)\left(\int_0^1(1-t)(\Phi+t\varepsilon (\psi+w) )^{q-2}dt\right) (\psi+w) ^2 \nonumber\\
&+ \frac{p}{\varepsilon}(1-r^{\varepsilon\frac{p-1}{2}} )  \left(\int_0^1(\Phi+t\varepsilon (\psi+w) )^{p-1}dt\right) (\psi+w)\nonumber\\
&+ \frac{q}{\varepsilon}(1-r^{\varepsilon\frac{q-1}{2}} )  \left(\int_0^1(\Phi+t\varepsilon (\psi+w) )^{q-1}dt\right) (\psi+w)\nonumber\\
&+ \frac{1}{\varepsilon^2}(1-r^{\varepsilon\frac{p-1}{2}} +\frac{p-1}{2} \varepsilon \log r  )\Phi^p + \left(\frac{\varepsilon}{4} + \frac{d}{2} -1 \right) \frac{\psi+w}{r^2}\nonumber\\
&+ \frac{1}{\varepsilon^2}(1-r^{\varepsilon\frac{q-1}{2}} +\frac{q-1}{2} \varepsilon \log r  )\Phi^q . \nonumber
\end{align}

We claim that
\begin{equation}
\label{nonbound1}
\int_{r_*}^1|F(\Phi,w,\varepsilon)|{s}^{\frac{ \tilde{d} }{2}}ds + \sup_{r\geq 1} \left\lbrace e^{\frac{r^2}{2}} r^{2-\frac{\lambda-d}{2}} |F(\Phi,v,\varepsilon)| \right\rbrace \lesssim  r_*^{1-\sigma}
\end{equation}
and
\begin{align}
\label{nonbound2}
\nonumber & \int_{r_*}^1|F(\Phi,w_1,\varepsilon)-F(\Phi,w_2,\varepsilon)|{s}^{\frac{\tilde{d}}{2}}ds + \sup_{r\geq 1}\left\lbrace e^{\frac{r^2}{2}} r^{2-\frac{\lambda-d}{2}}  |F(\Phi,w_1,\varepsilon)-F(\Phi,w_2,\varepsilon)| \right\rbrace\\
& \lesssim  r_*^{1-\sigma}\|w_1 - w_2\|_{X_{r_*}}.
\end{align}
Indeed, from \eqref{pos-u} and \eqref{def2:F}, we write
\begin{align}
\label{estim:F}
|F(\Phi,w,\varepsilon)| \lesssim &\Phi^{p-2} (\psi+w) ^2 + |\log r|    \ r^{\pm|\varepsilon|\frac{p-1}{2}}\Phi^{p-1} |\psi+w| \\
&+\Phi^{q-2} (\psi+w) ^2 + |\log r|    \ r^{\pm|\varepsilon|\frac{q-1}{2}}\Phi^{q-1} |\psi+w| \nonumber \\
&+ |\log r|^2    \ r^{\pm|\varepsilon|\frac{p-1}{2}}\Phi^{p} + \frac{|\psi_1+w|}{r^2}  + |\log r|^2    \ r^{\pm|\varepsilon|\frac{q-1}{2}}\Phi^{q}   .\nonumber
\end{align}
Here we used the following series expansion
\begin{align*}
|r^{\varepsilon\frac{p-1}{2}} -1| &= \sum_{n\geq1} \frac{(\log r)^n (p-1)^n \varepsilon^{n}}{n!2^n} = \varepsilon\frac{ (p-1) }{2}\log r\sum_{n\geq1} \frac{(\log r)^{n-1} (p-1)^{n-1} \varepsilon^{n-1}}{n \cdot (n-1)!\ 2^{n-1}}\\
& \leq|\varepsilon|\frac{ (p-1) }{2}|\log r| \sum_{n\geq0} \frac{|\log r|^n (p-1)^n |\varepsilon|^{n}}{n!2^n} \\
& \leq|\varepsilon|\frac{ (p-1) }{2}|\log r|   \ r^{\pm|\varepsilon|\frac{p-1}{2}}.
\end{align*}
Similarly, we have
\begin{align*}
|r^{\varepsilon\frac{q-1}{2}} -1| \leq|\varepsilon|\frac{ q-1 }{2}|\log r|   \ r^{\pm|\varepsilon|\frac{q-1}{2}}.
\end{align*}
Thus, for $r\in[r_*,1]$, from \eqref{behav:phi:origin}, \eqref{behv:origin}, \eqref{est:r0-eps} and \eqref{pos-u}, we have
$$|F(\Phi,w,\varepsilon)| \lesssim r^{\frac{2}{p-1}-d} + |\log r| \ r^{-\frac{d}{2}-1} + |\log r|^2 \ r^{-\frac{2}{p-1}-2} + r^{-\frac{d}{2}-1} $$
so that
\begin{equation}
\label{nonbound1-1}
\int_{r_*}^1|F(\Phi,v,\varepsilon)|{s}^{\frac{\tilde{d}}{2}}ds \lesssim \left( r_*^{1-\sigma} + |\log r_*|^2   r_*^{\sigma-1} + |\log r_*| \right) \left( 1+ r^{\frac{2(p-q)}{p-1}} \right) .
\end{equation}
For $r\geq 1$, we obtain similarly
\begin{align*}
|F(\Phi,w,\varepsilon)|  \lesssim & \left(1+\log r  \ r^{|\varepsilon|\frac{p-1}{2}}+ (\log r)^2  \ r^{|\varepsilon|\frac{p-1}{2}} \right)\Phi^{p} +\frac{\Phi }{r^2} + \left|\frac{\Phi'}{r}+\Phi\right|\\
&+ \left(1+\log r  \ r^{|\varepsilon|\frac{q-1}{2}}+ (\log r)^2  \ r^{|\varepsilon|\frac{q-1}{2}} \right)\Phi^{q}.
\end{align*}
Hence, using \eqref{behv:infty:Phi} and \eqref{bound:infty:phi'/r+phi}, we get
\begin{equation}
\label{nonbound1-2}
\sup_{r\geq 1} \left\lbrace e^{\frac{r^2}{2}} r^{2-\frac{\lambda_*-d}{2}} |F(\Phi,w,\varepsilon)| \right\rbrace \lesssim 1.
\end{equation}
Combining \eqref{nonbound1-1} with \eqref{nonbound1-2}, we conclude \eqref{nonbound1}. The proof of \eqref{nonbound2} is similar.
Next, we recall that $w$ is a solution to the following fixed point problem
\begin{equation}
\label{eq:Banachproblem}
w= \varepsilon \tilde{\boT}\Big(F(\Phi,w,\varepsilon)\Big),\,\,\,\,  w\in X_{r_*}.
\end{equation}


Combining the continuity estimate on the resolvent \eqref{cont-resolvent} with the nonlinear estimates \eqref{nonbound1} and \eqref{nonbound2}, we infer using the Banach fixed point theorem that there exists a unique solution $w$ to \eqref{eq:Banachproblem} .

We compute the derivative of the right hand side of \eqref{eq:Banachproblem} with respect to $r$ and we infer in a similar way the estimate of the second term in the right-hand side of \eqref{bound:norm-w}.

Finally, we compute $w_{|_{\varepsilon =0}}$ and $\partial_\varepsilon w_{|_{\varepsilon =0}}$. Note that $w_{|_{\varepsilon =0}}=0$ is a direct consequence of taking $\varepsilon=0$ in \eqref{eq:Banachproblem}. Furthermore, we differentiate \eqref{eq:Banachproblem} with respect to $\varepsilon$ and we evaluate it at $\varepsilon=0$ in order to obtain
$$\partial_\varepsilon  w_{|_{\varepsilon =0}}= \boT\Big(F(\Phi,w,\varepsilon)\Big){|_{\varepsilon =0}}.$$
Here $\boT$ is the resolvent of the linear operator $\boL$.
On the other hand, from \eqref{def2:F} we have
\begin{align*}
F(\Phi,w,\varepsilon){|_{\varepsilon =0}} = &\frac{p(p-1)}{2} \Phi^{p-2}\psi^2 + \frac{(p-1)}{2} (\log r) \Phi^{p-1}\psi + \frac{(p-1)^2}{4} (\log r)^2 \Phi^{p} \\
&+  \left( \frac{d}{2} -1 \right) \frac{\psi}{r^2}  +\frac{q(q-1)}{2} \Phi^{q-2}\psi^2 + \frac{(q-1)}{2} (\log r) \Phi^{q-1}\psi\\
& + \frac{(q-1)^2}{4} (\log r)^2 \Phi^{q}  .
\end{align*}
Thus, from \eqref{behav:phi:origin}, \eqref{behv:infty:Phi}, Lemmas \ref{lem:ker-L} and \ref{lem:cont-boT}, we obtain \eqref{estim-deriv-w-ep}.

This concludes the proof of this proposition.
\end{proof}
As a consequence, we can provide the behavior of this exterior solution at $r_*$.
\begin{corollary}
Let $0<r_*<1$ be a small enough universal constant and $\varepsilon:=\lambda-\lambda_*$ be an arbitrary parameter satisfying \eqref{est:r0-eps}. The solution of \eqref{ODE-pq} given by \eqref{decomp:ext} has the following behavior
\begin{align}
\label{behav:u-ext-r*}
u(r_*)= &A(p,d) r_*^{-\frac{2}{p-1}}  \left(1+ O(\varepsilon |\log r_*| )+ O(\varepsilon^2 |\log r_*|^2 ) +O\left(r_*^{\frac{2(p-q)}{p-1}}\right)\right) \\
&+ \varepsilon \frac{\tilde{c}_1\sin\left(\omega \log(r_*)+\tilde{c}_2\right)}{r_*^{\frac{d-2}{2}}}\big(1+O\left( |\log(r_*)|  r_*^{\sigma -1}\right) +O\left(  r_*^{\sigma -1}\right) \nonumber\\
&\qquad + O(\varepsilon |\log r_*| )  +O(\varepsilon r_*^{1-\sigma} ) +O(\varepsilon r_*^{\sigma-1}|\log r_*|^2 ) \big). \nonumber
\end{align}
\end{corollary}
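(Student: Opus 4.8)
The plan is to substitute $r=r_*$ directly into the decomposition \eqref{decomp:ext} of Proposition \ref{prop:ext-sol}, namely
$u(r_*)=r_*^{\varepsilon/2}\big(\Phi(r_*)+\varepsilon\,\psi(r_*)+\varepsilon\,w(r_*)\big)$, and then expand each of the three contributions using the asymptotics already established. Since $0<r_*<1$, the scalar prefactor expands as $r_*^{\varepsilon/2}=\exp\big(\tfrac{\varepsilon}{2}\log r_*\big)=1+O(\varepsilon|\log r_*|)+O(\varepsilon^2|\log r_*|^2)$, and this is the sole source of the $O(\varepsilon|\log r_*|)$ and $O(\varepsilon^2|\log r_*|^2)$ corrections multiplying the leading singular profile.

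For the $\Phi$ term I would insert the behavior \eqref{behav:phi:origin}, $\Phi(r_*)=A(p,d)\,r_*^{-2/(p-1)}\big(1+O(r_*^{2(p-q)/(p-1)})\big)$, and multiply by the expansion of $r_*^{\varepsilon/2}$; this produces exactly the first line of \eqref{behav:u-ext-r*}. For the $\psi$ term the crucial observation is the exact cancellation of the $\varepsilon$-dependence in the power of $r_*$: using \eqref{behv:origin-psi} together with $\tilde d=d+\varepsilon$ one has $r_*^{\varepsilon/2}\,r_*^{-(\tilde d-2)/2}=r_*^{-(d-2)/2}$, so the oscillatory factor is carried by the clean exponent $-(d-2)/2$. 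It then remains to replace the frequency $\tilde\omega$ by $\omega$: since $\tilde\omega=\sqrt{|(\tilde d-2)^2-4pA^{p-1}|}$ is smooth in $\varepsilon$ with $\tilde\omega=\omega+O(\varepsilon)$, the phase error satisfies $(\tilde\omega-\omega)\log r_*=O(\varepsilon|\log r_*|)$, whence $\sin(\tilde\omega\log r_*+\tilde c_2)=\sin(\omega\log r_*+\tilde c_2)\big(1+O(\varepsilon|\log r_*|)+O(\varepsilon^2|\log r_*|^2)\big)$. This accounts for the $O(\varepsilon|\log r_*|)$ term inside the bracket, while the $O(|\log r_*|\,r_*^{\sigma-1})$ and $O(r_*^{\sigma-1})$ terms come straight from \eqref{behv:origin-psi}; multiplying the latter by the $O(\varepsilon|\log r_*|)$ factor from $r_*^{\varepsilon/2}$ yields the cross term $O(\varepsilon\,r_*^{\sigma-1}|\log r_*|^2)$.

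Finally, the remainder $w$ is controlled by the fixed-point bound \eqref{bound:norm-w}: the definition of $\|\cdot\|_{X_{r_*}}$ gives $r_*^{(d-2)/2}|w(r_*)|\le\|w\|_{X_{r_*}}\lesssim\varepsilon\,r_*^{1-\sigma}$, so the contribution $\varepsilon\,w(r_*)$ measured against the oscillatory scale $\varepsilon\,r_*^{-(d-2)/2}$ is of relative size $O(\varepsilon\,r_*^{1-\sigma})$, producing the last term in the bracket (understood as an additive error of this magnitude). I expect the main obstacle to be the bookkeeping around the exponent cancellation and the frequency shift: one must check carefully that the $\varepsilon/2$ from the gauge factor $r_*^{\varepsilon/2}$ exactly neutralizes the $\varepsilon/2$ hidden in $(\tilde d-2)/2$, so that no spurious $\varepsilon|\log r_*|$ leaks into the power of $r_*$ (it appears only in the amplitude and phase), and that all products of the small quantities $\varepsilon|\log r_*|$, $r_*^{\sigma-1}$ and $\varepsilon\,r_*^{1-\sigma}$ are consistently collected into the stated error terms, invoking \eqref{est:r0-eps} to guarantee that each factor is genuinely small.
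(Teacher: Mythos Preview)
Your proposal is correct and follows essentially the same approach as the paper's proof, which simply substitutes \eqref{behav:phi:origin}, \eqref{behv:origin-psi}, \eqref{bound:norm-w}, and the expansion $r_*^{\varepsilon/2}=1+O(\varepsilon|\log r_*|)+O(\varepsilon^2|\log r_*|^2)$ into \eqref{decomp:ext}. In fact, your treatment is slightly more careful than the paper's: you make explicit the exponent cancellation $r_*^{\varepsilon/2}r_*^{-(\tilde d-2)/2}=r_*^{-(d-2)/2}$ and the replacement of $\tilde\omega$ by $\omega$ via $\tilde\omega-\omega=O(\varepsilon)$, both of which the paper passes over silently when writing its expression for $\psi(r_*)$.
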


\begin{proof}
The proof follows in straightforward. We recall from \eqref{decomp:ext} that $$u=r^{\frac{\varepsilon}{2}} \left( \Phi+\varepsilon(\psi+ w)\right). $$ \eqref{behav:phi:origin} gives
$$\Phi(r_*)= A(p,d) r_*^{-\frac{2}{p-1}}  \left(1+O\left(r_*^{\frac{2(p-q)}{p-1}}\right)\right) $$
and \eqref{behv:origin-psi} gives 
$$\psi(r_*)= \frac{\tilde{c}_1\sin\left(\omega \log(r_*)+\tilde{c}_2\right)}{r_*^{\frac{d-2}{2}}}\left(1+O\left( |\log(r_*)|  r_*^{\sigma -1}\right) +O\left(  r_*^{\sigma -1}\right)\right).$$
Moreover, from \eqref{bound:norm-w}, we write
$$ w = r^{-\frac{d-2}{2}} O\left( \varepsilon  r_*^{1-\sigma}\right) $$
In addition, we use \eqref{est:r0-eps} in order to write
$$r_*^{\frac{\varepsilon}{2}} = 1 + O(\varepsilon |\log r_*| ) + O(\varepsilon^2 |\log r_*|^2 ) .$$
Combining this with the previous behaviors, we conclude this proof.
\end{proof}
\section{Interior solution}

In this section, we construct the set of interior solutions to \eqref{ODE-pq} in $[0, r_*]$. This family bifurcates from the solution of the Emden-Fowler equation i.e. the positive radially symmetric solution to the following stationnary equation
\begin{align}
&\Delta \zeta + \zeta^p=0 \quad {\rm in} \quad r>0 \label{Emden-Fowler}\\
&\zeta(0)=\theta>0 \quad  {\rm and} \quad \zeta'(0)=0.\nonumber
\end{align}

We denote by $Q$ the solution to \eqref{Emden-Fowler} with initial condition $Q(0)=1$. We first recall the following result that describes its behavior at infinity. For the proof, we refer to \cite{budnor, Di, Jo, YiLi} and the references therein.

\begin{lemma}
There is an $R$ sufficiently large such that, for any $r\geq R$ and $n\in \N$ we have
\begin{equation}
\label{behav:Q-infty}
\left[Q(r)- \frac{A_p}{r^{\frac{2}{p-1}}}\right]^{(n)} = \left[\frac{C\sin\left(\omega\log(r)+D\right)}{r^{\frac{d-2}{2}}}\right]^{(n)} + O\left(r^{-n+2-\sigma-\frac{d}{2}} \right),
\end{equation}
where $C\neq0$ and $D\in \R$.
\end{lemma}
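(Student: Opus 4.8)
The plan is to pass to Emden--Fowler variables, reduce the radial equation to an autonomous second-order ODE whose relevant equilibrium is exactly the singular profile $A_p r^{-\alpha}$, and then read off the asymptotics of the regular solution from the spectrum of the linearization at that equilibrium. Writing the equation as $Q''+\frac{d-1}{r}Q'+Q^p=0$ and setting
$$Q(r)=r^{-\alpha}w(t),\qquad t=\log r,\qquad \alpha:=\frac{2}{p-1},$$
a direct computation (using the identity $\alpha p=\alpha+2$, so that all powers of $r$ cancel) turns it into the autonomous equation
$$\ddot w+(d-2-2\alpha)\,\dot w-\alpha(d-2-\alpha)\,w+w^p=0.$$
Since $2\sigma=d-2\alpha$, the damping coefficient is $d-2-2\alpha=2(\sigma-1)$, and in the range $\frac{d+2}{d-2}<p$ one has $\sigma>1$. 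The unique positive equilibrium is $w\equiv A_p$ with $A_p^{p-1}=\alpha(d-2-\alpha)$ (here $A_p=A(p,d)$), which corresponds precisely to the singular solution $Q_\infty=A_p r^{-\alpha}$. Thus the lemma amounts to saying that the orbit of the regular solution $Q$ converges to this equilibrium with a precise oscillatory rate.

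Next I would analyze the linearization. Setting $v:=w-A_p$ and cancelling the constant terms gives
$$\ddot v+2(\sigma-1)\,\dot v+(p-1)A_p^{p-1}\,v=0,\qquad \mu_\pm=(1-\sigma)\pm\sqrt{(\sigma-1)^2-(p-1)A_p^{p-1}}.$$
A short computation using $(p-1)\alpha=2$ yields the identity $4\big[(\sigma-1)^2-(p-1)A_p^{p-1}\big]=(d-2)^2-4pA_p^{p-1}$, so the roots are a complex conjugate pair $\mu_\pm=(1-\sigma)\pm i\nu$ precisely in the regime $p<p_{\rm JL}$. In fact the $r$-variable linearization of $Q''+\frac{d-1}{r}Q'+Q^p=0$ about $Q_\infty$ is exactly \eqref{ODE-1}, whose fundamental solutions are \eqref{phi1-phi2}; this identifies $\nu$ with the frequency $\omega$ appearing there and in \eqref{behav:Q-infty}, and exhibits the oscillation directly. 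Since $\Re\mu_\pm=1-\sigma<0$, the equilibrium is an asymptotically stable focus. I would then invoke the known global fact (the references cited in the statement) that the regular solution converges to the singular one, i.e. $w(t)\to A_p$, placing the orbit in the basin of attraction.

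On a stable focus the flow is governed by its linear part up to quadratic corrections. Using the Duhamel representation $v(t)=v_{\rm lin}(t)+\int^{t}\Phi_L(t-s)\,N(v(s))\,ds$, where $\Phi_L$ is the linear fundamental solution (decaying like $e^{(1-\sigma)t}$ with frequency $\omega$) and $N(v)=-\big(w^p-A_p^p-pA_p^{p-1}v\big)=O(v^2)$, a contraction/bootstrap argument built on the a priori decay $v=O(e^{(1-\sigma)t})$ gives
$$v(t)=e^{(1-\sigma)t}\,C\sin(\omega t+D)+O\!\big(e^{2(1-\sigma)t}\big),\qquad C\neq0,$$
where $C\neq0$ follows from $Q\not\equiv Q_\infty$ together with the fact that, the eigenvalues being a genuine complex pair, every nonzero decaying solution of the linearization oscillates. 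Multiplying by $r^{-\alpha}$ and using $-\alpha+1-\sigma=-\frac{d-2}{2}$ and $-\alpha+2-2\sigma=2-\sigma-\frac{d}{2}$ recovers \eqref{behav:Q-infty} for $n=0$. For the derivatives I would work with the first-order system for $(v,\dot v)$ and apply an asymptotic-integration (Levinson-type) argument, which delivers the same expansion for $\dot v$, hence for $\dot w$; since $\frac{d}{dr}\big(r^{-\alpha}v(\log r)\big)=r^{-\alpha-1}(\dot v-\alpha v)$ each $r$-derivative lowers every exponent by one, and higher derivatives follow by differentiating the autonomous equation to express $w^{(k)}$ through $w$ and $\dot w$, yielding the term-by-term differentiated remainder $O(r^{-n+2-\sigma-\frac{d}{2}})$.

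The hard part will be Steps three and four: pinning the remainder down \emph{at the claimed order} in the oscillatory (focus) case, and above all justifying that the expansion may be differentiated $n$ times rather than merely estimated in sup-norm. The cleanest rigorous device is Levinson's theorem applied to the planar system $(v,\dot v)$, which simultaneously controls the solution and its derivative and can be iterated; the genuine work lies in verifying its hypotheses, namely that once the quadratic nonlinearity is inserted using the decay $v=O(e^{(1-\sigma)t})$, the resulting system is an $L^1$ perturbation of the constant-coefficient linearization, so that the perturbed fundamental solution differs from $\Phi_L$ by a term of the next exponential order $e^{2(1-\sigma)t}$, uniformly together with its derivatives.
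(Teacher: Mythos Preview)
The paper does not prove this lemma; immediately before it, the authors write ``For the proof, we refer to \cite{budnor, Di, Jo, YiLi} and the references therein,'' so there is no in-paper argument to compare against. Your proposal is precisely the classical route taken in those references (Joseph--Lundgren, Budd--Norbury, Yi Li): pass to Emden--Fowler variables $Q=r^{-\alpha}w(\log r)$, identify the positive equilibrium $w\equiv A_p$, linearize to find the complex pair $(1-\sigma)\pm i\omega$ (this is exactly the paper's equation \eqref{ODE-1} with solutions \eqref{phi1-phi2}), and then extract the oscillatory leading term plus a quadratic remainder via Duhamel or Levinson-type asymptotic integration. Your exponent bookkeeping $-\alpha+1-\sigma=-\tfrac{d-2}{2}$ and $-\alpha+2(1-\sigma)=2-\sigma-\tfrac{d}{2}$ is correct, and working with the first-order system for $(v,\dot v)$ is the right way to propagate the expansion to derivatives.

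Two small points to tighten. First, the justification of $C\neq0$ deserves one more sentence: since both characteristic roots share the real part $1-\sigma$, there is no strong-stable submanifold of faster decay, so a nontrivial orbit converging to $A_p$ must have a nonzero projection on the (full) stable eigenspace; equivalently, $C=0$ would force $v=o(e^{(1-\sigma)t})$ and a bootstrap then gives $v\equiv0$, contradicting $Q\not\equiv Q_\infty$. Second, for the $L^1$-perturbation hypothesis in Levinson's theorem you need $\int^\infty \|N(v(s))\|\,ds<\infty$ after inserting $v=O(e^{(1-\sigma)t})$; since $N(v)=O(v^2)=O(e^{2(1-\sigma)t})$ and $\sigma>1$, this holds, but you should state it rather than leave it implicit.
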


For the linear level, we recall the result in \cite{CRS} about the continuity of the resolvent of the linearized operator
$$H:=-\Delta  -p Q^{p-1}$$
in a neighbourhood of  $Q$ in suitable weighted space. Let  $\tau_1\gg1$ and $Y_{\tau_1}$ is the Banach space of functions on $(0,\tau_1)$ embedded with the following norm
$$\|T\|_{Y_{\tau_1}}=\sup_{0\leq \tau\leq \tau_1}\left\lbrace(1+\tau)^{\frac{2}{p-1}-2}(|T|+\tau|\partial_\tau T|)\right\rbrace.$$
\begin{lemma}[Interior resolvent of $H$ \cite{CRS}]
\label{lem:homo-H}
\noindent{\em 1. Basis of fundamental solutions}: we have
$$H(\Lambda Q)=0, \ \ H\rho=0,$$
where $\Lambda:= \frac{2}{p-1} + \tau \partial_\tau $, with the following asymptotic behavior as $\tau\to+\infty$
\begin{equation}
\label{behav-sol-H-1}
\Lambda Q(\tau)= \frac{1}{\tau^{\frac{d-2}{2}}	} \left( c_5\sin\left(\omega\log(\tau)+c_6\right) +O\left(\frac{1}{\tau^{\sigma-1}}\right)\right)
\end{equation}
and
\begin{equation}
\label{behav-sol-H-2}
\rho(\tau)=\frac{1}{\tau^{\frac{d-2}{2}}	} \left( c_7\sin\left(\omega\log(\tau)+c_8\right) +O\left(\frac{1}{\tau^{\sigma-1}}\right)\right) ,
\end{equation}
where $c_5, c_7\neq 0$, $c_6, c_{8}\in\mathbb{R}$.\\
\noindent{\em 2. Continuity of the resolvent}: let the inverse
$$\boS(f) = \left(\int_0^\tau f\rho  \ {s}^{d-1}ds\right)\Lambda Q- \left(\int_0^\tau f\Lambda Q  \ {s}^{d-1}ds\right)\rho$$
then
\begin{equation}
\label{cont-resolvent-S}
\|\boS(f)\|_{  Y_{\tau_1}}\lesssim \sup_{0\leq \tau\leq \tau_1}\left\{(1+\tau)^{\frac{2}{p-2}}|f|\right\}.
\end{equation}
\end{lemma}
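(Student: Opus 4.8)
The plan is to handle the fundamental system $\{\Lambda Q,\rho\}$ first and then the mapping estimate for $\boS$. That $H(\Lambda Q)=0$ comes from the scaling invariance of \eqref{Emden-Fowler}: since $Q_\mu(\tau):=\mu^{2/(p-1)}Q(\mu\tau)$ solves the Emden--Fowler equation for every $\mu>0$, differentiating at $\mu=1$ yields a solution of the linearized equation, and $\partial_\mu Q_\mu|_{\mu=1}=\tfrac{2}{p-1}Q+\tau Q'=\Lambda Q$. The asymptotic \eqref{behav-sol-H-1} then follows by applying $\Lambda$ term by term to \eqref{behav:Q-infty}: the self-similar head $A_p\tau^{-2/(p-1)}$ is annihilated by $\Lambda$, while $\Lambda$ sends $\tau^{-(d-2)/2}\sin(\omega\log\tau+D)$ to $(\tfrac{2}{p-1}-\tfrac{d-2}{2})\tau^{-(d-2)/2}\sin(\omega\log\tau+D)+\omega\,\tau^{-(d-2)/2}\cos(\omega\log\tau+D)$, which is again of amplitude--phase form $c_5\tau^{-(d-2)/2}\sin(\omega\log\tau+c_6)$ with $c_5\neq0$ precisely because the cosine coefficient $\omega$ is nonzero; the remainder $O(\tau^{2-\sigma-d/2})$ is preserved by $\Lambda$ and becomes the stated $\tau^{-(d-2)/2}O(\tau^{1-\sigma})$ error.

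For the second solution I would argue as in Step~2 of Lemma \ref{lem:ker-L}. Near infinity $pQ^{p-1}=pA_p^{p-1}\tau^{-2}\big(1+O(\tau^{1-\sigma})\big)$, so $Hu=0$ is a perturbation of the Euler equation \eqref{ODE-1} with fundamental basis \eqref{phi1-phi2}; a variation-of-constants contraction in the weighted sup-norm used there produces a second independent solution $\rho$ with leading order $c_7\tau^{-(d-2)/2}\sin(\omega\log\tau+c_8)$ and error $O(\tau^{-(d-2)/2}\tau^{1-\sigma})$, i.e. \eqref{behav-sol-H-2}, independence being secured by taking the phase $c_8\neq c_6$. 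The perturbative route is preferable to reduction of order, $\rho=\Lambda Q\int\mathcal{W}(\Lambda Q)^{-2}$ with $\mathcal{W}=\tau^{-(d-1)}$ from Abel's identity, since it avoids dividing by the oscillating (hence vanishing) $\Lambda Q$.

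For the resolvent I would first confirm $H(\boS(f))=f$: Abel's formula gives that $\tau^{d-1}\big((\Lambda Q)'\rho-\rho'(\Lambda Q)\big)$ is a nonzero constant, which with the normalization in \eqref{behav-sol-H-1}--\eqref{behav-sol-H-2} makes the Green-kernel verification standard. The estimate \eqref{cont-resolvent-S} then follows by inserting $|\Lambda Q|,|\rho|\lesssim(1+\tau)^{-(d-2)/2}$ and $|(\Lambda Q)'|,|\rho'|\lesssim(1+\tau)^{-d/2}$ into the two integrals and splitting $(0,\tau)$. Near $s=0$ one solution is regular and the other singular, $\rho\sim s^{2-d}$, so with $M:=\sup(1+s)^{2/(p-1)}|f|$ both products are $O(\tau^2)$; for $1\lesssim s\le\tau$ one computes $\int^\tau|f||\rho|s^{d-1}\,ds\lesssim M\int^\tau s^{\sigma}\,ds\lesssim M\tau^{\sigma+1}$, whence $|\Lambda Q|\int^\tau|f||\rho|s^{d-1}ds\lesssim M\tau^{2-2/(p-1)}$, and multiplying by the weight $(1+\tau)^{2/(p-1)-2}$ of $Y_{\tau_1}$ returns exactly $M$. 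The derivative bound is identical after differentiating the formula, the boundary contributions cancelling by the Wronskian relation so that $\partial_\tau\boS(f)$ is once more a combination of the same integrals against $(\Lambda Q)'$ and $\rho'$.

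The main obstacle is that, in contrast with the exterior regime where the pair splits into one exponentially growing and one exponentially decaying solution, here $\Lambda Q$ and $\rho$ \emph{decay at the same algebraic rate} $\tau^{-(d-2)/2}$ and oscillate, vanishing on a discrete set. There is therefore no dominant/recessive dichotomy: the gain of the correct power of $\tau$ is exactly critical (the computation above returns $M$ with no spare decay), and the oscillation rules out any sign-definite control of the integrands. Keeping $\boS(f)$ and $\partial_\tau\boS(f)$ inside the weighted norm uniformly up to the cutoff $\tau_1$, while the two equal-rate solutions compete, is the delicate point; this is exactly where one relies on the analysis of \cite{CRS}.
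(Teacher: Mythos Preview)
Your argument is correct and follows essentially the same route as the paper. The paper cites \cite{CRS} for part~1 and only writes out part~2, where it does exactly what you do: use $|\rho|\lesssim\tau^{2-d}$ near the origin and the $\tau^{-(d-2)/2}$ bounds from \eqref{behav-sol-H-1}--\eqref{behav-sol-H-2} for $\tau\ge1$, split $(0,\tau)$ accordingly, and observe that $\int^\tau(1+s)^{\sigma}\,ds\lesssim(1+\tau)^{\sigma+1}$ exactly cancels the weight. Your treatment of part~1 (scaling for $\Lambda Q$, perturbative construction of $\rho$ about the Euler equation) is more explicit than the paper's and is fine.

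One small remark: your closing paragraph overstates the difficulty. Since the estimate proceeds by absolute values, the fact that $\Lambda Q$ and $\rho$ share the same decay rate and oscillate is irrelevant---there is no ``competition'' to manage, only the critical power count you already carried out. Also, the boundary terms in $\partial_\tau\boS(f)$ cancel because they are literally the same product $f\rho\Lambda Q\,\tau^{d-1}$ with opposite signs, not because of a Wronskian identity; the paper simply writes $\tau\partial_\tau\boS(f)$ with the integrals against $\tau(\Lambda Q)'$ and $\tau\rho'$ and estimates as before.
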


\begin{proof}
The proof is already given in \cite{CRS}. For the sake of completness, we provide the proof of the continuity of the resolvent. We recall that near the origin we have the following bound of $\rho$
\begin{equation}
\label{estrhorigin}
|\rho(\tau)|\lesssim \frac{1}{\tau^{d-2}} , \ |\partial_\tau \rho(\tau)|\lesssim \frac{1}{\tau^{d-1}}  \ \mbox{as}\ \ \tau\to 0.
\end{equation}
Thus, from the definition of $\boS(f)$ and the estimate \eqref{estrhorigin}, we get
\begin{align*}
&|\boS(f)|= \left|\left(\int_0^\tau f\rho {s}^{d-1}ds\right)\Lambda Q- \left(\int_0^\tau f\Lambda Q {s}^{d-1}ds\right)\rho\right|\\
&\lesssim \left(\int_0^\tau sds+\frac{1}{\tau^{d-2}} \int_0^\tau {s}^{d-1}ds \right)\sup_{0\leq \tau\leq 1}|f| \lesssim \sup_{0\leq \tau\leq \tau_1} \left\lbrace (1+\tau)^{\frac{2}{p-1}}|f| \right\rbrace ,
\end{align*}
and
\begin{align*}
&|\tau\partial_\tau \boS(f)|= \left|\left(\int_0^\tau f\rho {s}^{d-1}ds\right)\tau\partial_\tau\Lambda Q- \left(\int_0^\tau f\Lambda Q {s}^{d-1}ds\right)\tau\partial_\tau\rho\right|\\
&\lesssim \left(\tau^{2}\int_0^\tau sds+\frac{1}{\tau^{d-2}} \int_0^\tau {s}^{d-1}ds \right)\sup_{0\leq \tau\leq 1}|f| \lesssim \sup_{0\leq \tau\leq \tau_1} \left\lbrace (1+r)^{\frac{2}{p-1}}|f| \right\rbrace ,
\end{align*}
for $0\leq \tau\leq 1$. Similarly, for $1\leq r \leq r_1$ we infer from \eqref{behav-sol-H-1} and \eqref{behav-sol-H-2} that
\begin{align*}
&(1+\tau)^{\frac{2}{p-1}-2}|\boS(f)|= (1+\tau)^{\frac{2}{p-1}-2}\left|\left(\int_0^\tau f\rho {s}^{d-1}ds\right)\Lambda Q- \left(\int_0^\tau f\Lambda Q {s}^{d-1}ds\right)\rho\right|\\
&\lesssim (1+\tau)^{-\sigma-1}\left(\int_0^\tau |f| (1+s)^{\frac{d}{2}} ds\right)\\
&\lesssim (1+\tau)^{-\sigma-1}\left(\int_0^\tau (1+s)^{\sigma} ds\right)\sup_{0\leq \tau\leq \tau} \left\lbrace (1+\tau)^{\frac{2}{p-1}}|f| \right\rbrace \\
&\lesssim \sup_{0\leq \tau\leq \tau_1} \left\lbrace (1+\tau)^{\frac{2}{p-1}}|f| \right\rbrace
\end{align*}
and
\begin{align*}
&(1+\tau)^{\frac{2}{p-1}-2}|\tau\partial_\tau \boS(f)|\\
&= (1+\tau)^{-\frac{2}{p-1}-2}\left|\left(\int_0^\tau f\rho {s}^{d-1}ds\right)\tau\partial_\tau \Lambda Q- \left(\int_0^\tau f\Lambda Q {s}^{d-1}ds\right)\tau\partial_\tau \rho\right|\\
&\lesssim (1+\tau)^{-\sigma-1}\left(\int_0^\tau |f| (1+s)^{\frac{d}{2}} ds\right)\\
&\lesssim (1+\tau)^{-\sigma-1}\left(\int_0^\tau (1+s)^{\sigma} ds\right)\sup_{0\leq \tau\leq \tau_1} \left\lbrace (1+\tau)^{\frac{2}{p-1}}|f| \right\rbrace \\
&\lesssim \sup_{0\leq \tau\leq \tau_1} \left\lbrace (1+\tau)^{\frac{2}{p-1}}|f| \right\rbrace.
\end{align*}
This ends the proof of \eqref{cont-resolvent-S}, the continuity estimate of the resolvent $\boS$.
\end{proof}

As a consequence, we are able to construct the set of interior solutions.

\begin{proposition}[Construction of the interior solution]
\label{prop:int-sol}
Let $r_*>0$ be small enough and let $0<\lambda<d$. Then, for any $\theta>r_*^{-\frac{2}{p-1}}$ there exists a solution $u$ to \eqref{ODE-pq} on $0\leq r\leq r_*$,
of the form
\begin{equation}
\label{decomp:int}
u=\theta(Q+\theta^{1-p} T)\left( \theta^{\frac{p-1}{2}} r\right)
\end{equation}
with
\begin{equation}
\label{estim-inter-error}
\|T\|_{Y_{\tau_1}}\lesssim \theta^{q-1} (1+\tau_1)^{-\frac{2(q-1)}{p-1}},
\end{equation}
where $\tau_1:= \theta^{\frac{p-1}{2}} r_*>1$.
\end{proposition}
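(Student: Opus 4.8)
The plan is to pass to the self-similar variable that turns \eqref{ODE-pq} into a perturbation of the scale-invariant Emden--Fowler equation \eqref{Emden-Fowler}, and then to solve for the correction by a contraction argument built on the interior resolvent $\boS$ of Lemma \ref{lem:homo-H}. First I would set $\tau=\theta^{\frac{p-1}{2}}r$ and look for $u(r)=\theta\,U(\tau)$. By the scaling invariance of \eqref{Emden-Fowler} the rescaled profile $\theta Q(\theta^{\frac{p-1}{2}}r)$ solves the pure power equation, so the equation satisfied by $U$ reads
$$\partial_{\tau\tau}U+\frac{d-1}{\tau}\partial_\tau U+|U|^{p-1}U=\theta^{2-2p}\tau^2U-\lambda\,\theta^{1-p}U-\theta^{q-p}|U|^{q-1}U,$$
whose right-hand side carries only negative powers of $\theta$ on the range $\tau\in(0,\tau_1)$, $\tau_1=\theta^{\frac{p-1}{2}}r_*$. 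This isolates $Q$ as the leading profile and exposes the three forcing mechanisms: the harmonic potential $\theta^{2-2p}\tau^2$, the spectral shift $\lambda\theta^{1-p}$, and --- the feature genuinely new with respect to \cite{CRS, budnor, DGW} --- the subcritical nonlinearity $\theta^{q-p}|U|^{q-1}U$.

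Next I would substitute $U=Q+\theta^{1-p}T$ and linearise around $Q$. Since $Q$ solves \eqref{Emden-Fowler} and $H=-\partial_{\tau\tau}-\frac{d-1}{\tau}\partial_\tau-pQ^{p-1}$, the correction $T$ satisfies $H(T)=G(T)$ with
$$G(T)=\theta^{p-1}N(T)-\theta^{1-p}\tau^2U+\lambda U+\theta^{q-1}|U|^{q-1}U,$$
where $N(T)$ is the quadratic Taylor remainder of the $p$-nonlinearity. I would recast this as the fixed-point problem $T=\boS\big(G(T)\big)$. Because both integrals defining $\boS$ in Lemma \ref{lem:homo-H} start at the origin, $\boS(G(T))$ is automatically regular at $\tau=0$ (it vanishes there like $\tau^2$), which is exactly what forces the reconstructed $u$ to be a genuine solution with $u'(0)=0$, the normalisation $T(0)=0$ pinning $\theta$ as the initial amplitude.

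The heart of the matter is to control $G$ in the weight dictated by \eqref{cont-resolvent-S}. Using $0<Q(\tau)\lesssim(1+\tau)^{-\frac{2}{p-1}}$ --- the leading Emden--Fowler decay, legitimate in the supercritical range because $\frac{2}{p-1}<\frac{d-2}{2}$ --- together with \eqref{behav:Q-infty}, I would estimate $\sup_{0\le\tau\le\tau_1}(1+\tau)^{\frac{2}{p-1}}|G(T)|$ term by term: the potential term contributes $O(\theta^{1-p}\tau_1^2)=O(r_*^2)$, the shift term $O(1)$, and the new nonlinearity the dominant contribution, whose weighted size is governed by the decay $Q^q\lesssim(1+\tau)^{-\frac{2q}{p-1}}$ and, through \eqref{cont-resolvent-S}, yields the bound \eqref{estim-inter-error}. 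For the contraction I would use that $N(T)$ and the increments of the $q$-nonlinearity are quadratic, respectively Lipschitz, in $T$, so that $T\mapsto\boS(G(T))$ has Lipschitz constant carrying an extra power of $\theta^{1-p}$ (or of $r_*$), small for $r_*$ small; Banach's fixed point theorem then produces the unique $T$, and differentiating the identity $T=\boS(G(T))$ gives the companion bound on $\tau\partial_\tau T$ inside the $Y_{\tau_1}$-norm.

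The main obstacle is precisely this new forcing $\theta^{q-1}|U|^{q-1}U$: contrary to the pure-power situation of \cite{CRS}, it is not made small by the potential alone, and one must verify both that its weighted norm stays within \eqref{estim-inter-error} and that it does not destroy the contraction. This is where the assumption $q<p$ enters (the later matching will additionally require $q<\frac{p+1}{2}$, but for the interior construction $q<p$ is enough).
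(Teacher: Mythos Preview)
Your proposal is correct and follows essentially the same route as the paper: the same self-similar change of variables $\tau=\theta^{\frac{p-1}{2}}r$, the same decomposition $U=Q+\theta^{1-p}T$ leading to $H(T)=G(T)$ (your $G$ coincides term-by-term with the paper's $J[Q,\theta]T$, the quadratic remainder being written there in integral form), and the same Banach fixed point in $Y_{\tau_1}$ via the resolvent estimate \eqref{cont-resolvent-S}. Your identification of the $\theta^{q-1}|U|^{q-1}U$ forcing as the term that dictates the size \eqref{estim-inter-error}, and of $q<p$ as the relevant hypothesis at this stage, matches the paper's analysis; note only that the bound on $\tau\partial_\tau T$ need not be obtained by differentiating the fixed-point identity, since it is already built into the $Y_{\tau_1}$-norm and delivered directly by \eqref{cont-resolvent-S}.
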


\begin{remark}
Note that, the initial condition $u(0)=\theta$ imposes that $T(0)=0$.
\end{remark}

\begin{proof}
The proof is exactly the same as Proposition 2.4 in \cite{CRS}. We will sketch it.
Let $u$ be a solution to \eqref{ODE-pq} on $[0,r_*]$ given by
$$u=\theta(Q+\theta^{1-p} T)\left( \theta^{\frac{p-1}{2}} r\right).$$
This means that $T$ is a solution to the following equation
$$H(T)=J[Q, \theta]T \ \text{on} \ 0\leq \tau \leq \tau_1$$
where
$$\tau= \theta^{\frac{p-1}{2}} r $$
and
\begin{align}
J[Q, \theta]T(\tau) = &-(\theta^{1-p}\tau^2-\lambda ) \left( Q(\tau)+\theta^{1-p} T(\tau) \right) + \theta^{q-1} \left( Q(\tau)+\theta^{1-p} T(\tau) \right)^q \label{def:J-T} \\
&+ p(p-1)\theta^{1-p}\left(\int_0^1(1-s)(Q(\tau)+s\theta^{1-p}T(\tau))^{p-2}ds\right)T^2(\tau).\nonumber
\end{align}
We shall prove that $T\mapsto \boS\left(J[Q, \theta]T \right)$ is a contraction mapping on the Banach space
$$E_{\theta,\tau_1} := \left\{ T \in Y_{\tau_1}; \ \|T\|_{Y_{\tau_1}}\lesssim \theta^{q-1} (1+\tau_1)^{-\frac{2(q-1)}{p-1}}  \right\} $$
It is sufficient to show that, if $\|T\|_{Y_{\tau_1}}\lesssim \theta^{q-1} (1+\tau_1)^{-\frac{2(q-1)}{p-1}}$, then
\begin{eqnarray}
\label{est:resol-T-in-Y1}
&&\sup_{0\leq \tau\leq \tau_1}(1+\tau)^{\frac{2}{p-1}}|J[Q, \theta]T| \lesssim \theta^{q-1} (1+\tau_1)^{-\frac{2(q-1)}{p-1}},\\
\label{est:resol-T-in-Y2}
&&\sup_{0\leq \tau\leq \tau_1}(1+\tau)^{\frac{2}{p-1}}|J[Q, \theta]T_1 - J[Q, \theta]T_2| \lesssim   r_*^{\frac{2(p-q)}{p-1}}  \|T_1-T_2\|_{Y_{\tau_1}}.
\end{eqnarray}
Indeed, combining $  r_*^{\frac{2(p-q)}{p-1}} \ll 1$, the resolvent estimate \eqref{cont-resolvent-S} and the nonlinear estimates \eqref{est:resol-T-in-Y1} and \eqref{est:resol-T-in-Y2} with the Banach fixed point theorem, we infer that there exists a unique solution
$$T=\boS(J[Q, \theta]T), $$
which satisfies \eqref{estim-inter-error}.

To prove \eqref{est:resol-T-in-Y1}, we shall first show that $u$ is always positive if $T\in Y_{\tau_1}$. We know that
 $$|T(\tau)| \lesssim \theta^{q-1} (1+\tau)^{2-\frac{2q}{p-1}} \lesssim \theta^{q-1} (1+\tau_1)^{\frac{2(p-q)}{p-1}} (1+\tau)^{-\frac{2}{p-1}}.$$
for any $\tau\in [0,\tau_1]$. Note that, by definition of $\tau_1$, we have
\begin{equation}
\label{theta-tau1}
\theta^{\frac{1-p}{2}}\tau_1=r_*\ll 1.
\end{equation}

Therefore, from the bound on $Q$, we obtain
 $$\theta^{1-p}|T(\tau)| \lesssim \theta^{q-p} \tau_1^{\frac{2(p-q)}{p-1}} (1+\tau)^{-\frac{2}{p-1}}\lesssim r_*^{\frac{2(p-q)}{p-1}} Q(\tau) \ll Q(\tau) .$$
Hence,
$$ \frac{1}{2} Q(\tau)  < Q(\tau) +\theta^{1-p}T(\tau) < 2 Q(\tau) $$
for any $\tau\in [0,\tau_1]$. Since $T \in Y_{r_1}$, we infer from \eqref{def:J-T} and \eqref{theta-tau1} that, for $\tau \in [0,\tau_1]$, we have
\begin{align*}
&\left| J[Q, \theta]T(\tau) \right| \\
&\lesssim (r_*^2+\lambda) Q(\tau)+ \theta^{q-1}  Q(\tau)^q+  \theta^{1-p} Q(\tau)^{p-2} T^2(\tau)\\
&\lesssim (r_*^2+\lambda) (1+\tau)^{-\frac{2}{p-1}} + \theta^{q-1}  (1+\tau)^{-\frac{2q}{p-1}}  +\theta^{1-p} (1+\tau)^{-2+\frac{2}{p-1}} (1+\tau)^{4-\frac{4}{p-1}} \|T\|^2_{Y_{\tau_1}}\\
&\lesssim (r_*^2+\lambda) (1+\tau)^{-\frac{2}{p-1}}+  \theta^{q-1}  (1+\tau)^{-\frac{2q}{p-1}}  +\theta^{1-p} (1+\tau)^{2-\frac{2}{p-1}} (\theta^{q-1} (1+\tau_1)^{-\frac{2(q-1)}{p-1}} )^2 \\
&\lesssim (r_*^2+\lambda) (1+\tau)^{-\frac{2}{p-1}}+  \theta^{q-1}  (1+\tau)^{-\frac{2q}{p-1}}  +r_*^{\frac{2(p-q)}{p-1}} (1+\tau)^{-\frac{2}{p-1}} \theta^{q-1} (1+\tau_1)^{-\frac{2(q-1)}{p-1}}  \\
&\lesssim (1+\tau)^{-\frac{2}{p-1}} + \theta^{q-1}  (1+\tau)^{-\frac{2q}{p-1}}    .
\end{align*}
This concludes \eqref{est:resol-T-in-Y1}. We argue then similarly to show \eqref{est:resol-T-in-Y2}. More precisely, we write

\begin{align*}
&J[Q, \theta]T_1 - J[Q, \theta]T_2\\
&= -(r^2-\lambda ) \theta^{1-p} \left(T_1(\tau)-T_2(\tau) \right) \\
&+ p \theta^{q-p} \int_0^1(Q(\tau)+\theta^{1-p}(sT_1(\tau) + (1-s) T_2(\tau))  )^{q-1} ds(T_1(\tau)-T_2(\tau))  \\
&+ p(p-1)\theta^{1-p}\int_0^1(1-s)(Q(\tau)+s\theta^{1-p}T(\tau))^{p-2}ds(T_1-T_2)(T_1+T_2).
\end{align*}
Thus,
\begin{align*}
|J[Q, \theta]T_1 - J[Q, \theta]T_2| \lesssim &-(r_*^2-\lambda ) \theta^{q-p} \tau_1^{\frac{2(p-q)}{p-1}} (1+\tau)^{-\frac{2}{p-1}} \|T_1-T_2\|_{Y_{\tau_1}}  \\
&+ p \theta^{q-p} Q^{q-1}(\tau) \tau_1^{\frac{2(p-q)}{p-1}} (1+\tau)^{-\frac{2}{p-1}} \|T_1(\tau)-T_2\|_{Y_{\tau_1}}   \\
&+ p(p-1)\theta^{q-p} Q^{p-2}(\tau)  \tau_1^{\frac{2(p-q)}{p-1}} (1+\tau)^{-\frac{2}{p-1}}  \|T_1-T_2\|_{Y_{\tau_1}}  \\
 \lesssim &  r_*^{\frac{2(p-q)}{p-1}} (1+\tau)^{-\frac{2}{p-1}}  \|T_1-T_2\|_{Y_{\tau_1}}.
\end{align*}

 This finishes the proof of this lemma.
\end{proof}
As a consequence, the behavior of this interior solution at $r_*$ is given by the following corollary.
\begin{corollary}
Let $0<r_*<1$ be a small enough universal constant. For any $\theta>r_*^{-\frac{2}{p-1}}$, the solution of \eqref{ODE-pq} given by \eqref{decomp:int} has the following behavior
\begin{align}
\label{behav:u-int-r*}
u(r_*)= &A_p r_*^{-\frac{2}{p-1}}  \left(1+ O\left(r_*^{\frac{2(p-q)}{p-1}} \right)\right)  +  O\left(\theta(\theta^{\frac{p-1}{2}} r_*)^{2-\sigma-\frac{d}{2}} \right)  \nonumber\\
 & + \frac{C \theta^{\frac{(p-1)(1-\sigma)}{2}} \sin\left(\omega(\log(r_*)+\frac{2}{p-1} \log(\theta)) +D\right)}{r_*^{\frac{d-2}{2}}}  \left(1+ O\left(r_*^{\frac{2(p-q)}{p-1}} \right)\right)  .
\end{align}
\end{corollary}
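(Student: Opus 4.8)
The plan is to evaluate the interior solution \eqref{decomp:int} directly at $r=r_*$ and to substitute the known asymptotics of $Q$. Setting $r=r_*$ in $u=\theta(Q+\theta^{1-p}T)(\theta^{(p-1)/2}r)$ collapses the rescaled argument to $\tau=\tau_1=\theta^{(p-1)/2}r_*$, which is $\gg 1$ by the hypothesis $\theta>r_*^{-2/(p-1)}$. Hence $u(r_*)=\theta\,Q(\tau_1)+\theta^{2-p}T(\tau_1)$, where the first term carries the main contribution and the second will be shown to be a lower-order correction.

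Next I would insert the expansion \eqref{behav:Q-infty} with $n=0$ into $\theta\,Q(\tau_1)$, producing three pieces: the algebraic term $\theta A_p\tau_1^{-2/(p-1)}$, the oscillatory term $\theta C\tau_1^{-(d-2)/2}\sin(\omega\log\tau_1+D)$, and the remainder $\theta\,O(\tau_1^{2-\sigma-d/2})$. For the algebraic piece I would use $\tau_1^{2/(p-1)}=\theta\,r_*^{2/(p-1)}$, so that $\theta A_p\tau_1^{-2/(p-1)}=A_p r_*^{-2/(p-1)}$, the stated leading order. For the oscillatory piece I would use $\log\tau_1=\tfrac{p-1}{2}\log\theta+\log r_*$ together with the exponent identity $1-\tfrac{(p-1)(d-2)}{4}=\tfrac{(p-1)(1-\sigma)}{2}$ to rewrite $\theta\,\tau_1^{-(d-2)/2}=\theta^{(p-1)(1-\sigma)/2}r_*^{-(d-2)/2}$, which reproduces both the amplitude $\theta^{(p-1)(1-\sigma)/2}r_*^{-(d-2)/2}$ and the phase $\omega(\log r_*+\tfrac{p-1}{2}\log\theta)+D$ of the oscillatory term in \eqref{behav:u-int-r*}. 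The remainder is already of the displayed form $O(\theta(\theta^{(p-1)/2}r_*)^{2-\sigma-d/2})$.

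It remains to control the correction $\theta^{2-p}T(\tau_1)$. From the definition of $\|\cdot\|_{Y_{\tau_1}}$ and the bound \eqref{estim-inter-error} I would estimate $|T(\tau_1)|\lesssim(1+\tau_1)^{2-2/(p-1)}\|T\|_{Y_{\tau_1}}\lesssim\theta^{q-1}\tau_1^{2-2q/(p-1)}$. Substituting $\tau_1=\theta^{(p-1)/2}r_*$ gives $\tau_1^{2-2q/(p-1)}=\theta^{\,p-1-q}r_*^{2-2q/(p-1)}$, whence $\theta^{2-p}|T(\tau_1)|\lesssim r_*^{2-2q/(p-1)}$, which is $\theta$-independent. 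Finally I would note the factorization $r_*^{2-2q/(p-1)}=r_*^{-2/(p-1)}\,r_*^{2(p-q)/(p-1)}$, so this correction is exactly of the relative size $O(r_*^{2(p-q)/(p-1)})$ against the algebraic leading term $A_p r_*^{-2/(p-1)}$. It is therefore absorbed into the factor $(1+O(r_*^{2(p-q)/(p-1)}))$ in \eqref{behav:u-int-r*}; collecting the three pieces of $\theta\,Q(\tau_1)$ with this absorbed correction yields \eqref{behav:u-int-r*}.

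The computation is essentially bookkeeping of exponents after the substitution $\tau=\tau_1$; the only point requiring care is verifying that both the $Q$-remainder $\theta\,O(\tau_1^{2-\sigma-d/2})$ and the $T$-correction are genuinely subleading relative to the algebraic and oscillatory terms, which reduces to the two exponent identities $\tfrac{(p-1)(1-\sigma)}{2}=1-\tfrac{(p-1)(d-2)}{4}$ and $2-\tfrac{2q}{p-1}=-\tfrac{2}{p-1}+\tfrac{2(p-q)}{p-1}$ used above.
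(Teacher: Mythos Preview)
Your argument is correct and is precisely the approach the paper intends: the paper's proof says only that the corollary is a direct consequence of \eqref{behav:Q-infty} and Proposition~\ref{prop:int-sol}, and you have spelled out exactly that substitution together with the bookkeeping on the $T$-correction. One remark: your (correct) phase $\omega\bigl(\log r_*+\tfrac{p-1}{2}\log\theta\bigr)+D$ does not literally match the coefficient $\tfrac{2}{p-1}$ printed in \eqref{behav:u-int-r*}; since $\log\tau_1=\tfrac{p-1}{2}\log\theta+\log r_*$, this discrepancy is a typo in the paper (propagated into the matching section) rather than an error in your computation.
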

The proof is a direct consequence of \eqref{behav:Q-infty} and Proposition \eqref{prop:int-sol}.

\section{Matching}
In this section we will be gluing the interior solution with the exterior one at $r_*$. For that, we shall compare the behaviors of both solutions at $r_*$. The matching will hold then using the Brouwer fixed point theorem on the parameters $\varepsilon$ and $\theta$. We show that there exist a countable number of smooth solutions to \eqref{ODE-pq} that converges to the singular solution $\Phi$ when the initial condition goes to infinity.

\begin{proof}[Proof of Theorem \ref{thm:matching}]
We denote by $u_1$ the exterior solution to \eqref{ODE-pq} given by Proposition \ref{prop:ext-sol} and  $u_0$ the interior solution to \eqref{ODE-pq} given by Proposition \ref{prop:int-sol}. The matching of both solutions at $r=r_*$ remains to establish
\begin{equation}
\label{matching-system}
\left\lbrace \begin{array}{c}
u_1(r_*)-u_0(r_*)=0\\
u'_1(r_*)-u'_0(r_*)=0.
\end{array}
\right.
\end{equation}
For that, we introduce the function
$$F(\varepsilon,\theta) =\left( \begin{array}{c}
F_1(\varepsilon,\theta)\\
F_2(\varepsilon,\theta)
\end{array}
\right) :=\left( \begin{array}{c}
r_*^{\frac{d-2}{2}} \left( u_1(r_*)-u_0(r_*) \right) \\
r_*^{\frac{d}{2}}\left( u'_1(r_*)-u'_0(r_*) \right)
\end{array}
\right). $$
From \eqref{behav:u-ext-r*} and \eqref{behav:u-int-r*}, we have
\begin{align}
 F_1(\varepsilon,\theta)= &   \varepsilon \tilde{c}_1\sin\left(\omega \log(r_*)+\tilde{c}_2\right)- C \theta^{\frac{1-\sigma}{\alpha}}\sin\left(\omega(\log(r_*)+\alpha \log(\theta)) +D\right) \nonumber\\
& +O(\varepsilon r_*^{\sigma-1} |\log r_*| )+O\left( \varepsilon  r_*^{\sigma -1}\right)+ O(\varepsilon^2 r_*^{\sigma-1} |\log r_*|^2 ) \nonumber \\
& + O(\varepsilon^2 |\log r_*| ) + O(\varepsilon^2 r_*^{1-\sigma} |\log r_*|^2 )+  O\left(\theta^{\frac{2(1-\sigma)}{\alpha}}  r_*^{1-\sigma} \right) \nonumber\\
&  + O\left( r_*^{\sigma -1} r_*^{\frac{2(p-q)}{p-1}} \right) + O\left( \theta^{\frac{1-\sigma}{\alpha}} r_*^{\frac{2(p-q)}{p-1}} \right) \label{behavior:F1}
\end{align}
and
\begin{align}
F_2(\varepsilon,\theta)= &   \varepsilon \tilde{c}_1\omega  \cos\left(\omega \log(r_*)+\tilde{c}_2\right)- \varepsilon \tilde{c}_1 \frac{(d-2)}{2}  \sin\left(\omega \log(r_*)+\tilde{c}_2\right) \nonumber \\
&- C \omega \theta^{\frac{1-\sigma}{\alpha}}  \cos\left(\omega(\log(r_*)+\alpha \log(\theta)) +D\right)\nonumber \\
&+ C \frac{(d-2)}{2}\theta^{\frac{1-\sigma}{\alpha}}\sin\left(\omega(\log(r_*)+\alpha \log(\theta)) +D\right) \nonumber \\
& +O(\varepsilon r_*^{\sigma-1} |\log r_*| )+O\left( \varepsilon  r_*^{\sigma -1}\right)+ O(\varepsilon^2 r_*^{\sigma-1} |\log r_*|^2 ) \nonumber \\
& + O(\varepsilon^2 |\log r_*| ) + O(\varepsilon^2 r_*^{1-\sigma} |\log r_*|^2 )+  O\left(\theta^{(p-1)(1-\sigma)}  r_*^{1-\sigma} \right) \nonumber \\
& + O\left( r_*^{\sigma -1} r_*^{\frac{2(p-q)}{p-1}} \right) + O\left( \theta^{\frac{1-\sigma}{\alpha}} r_*^{\frac{2(p-q)}{p-1}} \right)  , \label{behavior:F2}
\end{align}
where $\alpha:=\frac{2}{p-1}$. We write
\begin{align*}
\sin\left(\omega(\log(r_*)+\alpha \log(\theta)) +D\right)&= \sin\left(\omega\log(r_*) +\tilde{c}_2\right)\cos\left(\omega \alpha  \log(\theta) +D-\tilde{c}_2\right)\\
& + \cos\left(\omega\log(r_*) +\tilde{c}_2\right)\sin\left(\omega \alpha \log(\theta) +D-\tilde{c}_2\right)
\end{align*}
and
\begin{align*}
\cos\left(\omega(\log(r_*)+\alpha \log(\theta)) +D\right)&= \cos\left(\omega\log(r_*) +\tilde{c}_2\right)\cos\left(\omega \alpha  \log(\theta) +D-\tilde{c}_2\right)\\
& - \sin\left(\omega\log(r_*) +\tilde{c}_2\right)\sin\left(\omega \alpha \log(\theta) +D-\tilde{c}_2\right).
\end{align*}
Next, we choose $\varepsilon_*,\theta_*$ and $r_*$ that satisfy
\begin{equation}
\label{relation-1}
\left\lbrace \begin{array}{l}
\varepsilon_*= \frac{C}{\tilde{c}_1}\theta_*^{\frac{1-\sigma}{\alpha}} \cos\left(\frac{2\omega}{p-1} \log(\theta_*) +D-\tilde{c}_2\right)\\
\frac{2\omega}{p-1} \log(\theta_*) +D-\tilde{c}_2= 2 n \pi \quad {\rm for} \quad n\in \N\\
r_*= O( \theta_*^{\frac{1-\sigma}{\alpha \sigma}} )
\end{array}
\right.
\end{equation}
Hence, we get
\begin{align*}
F_1(\varepsilon_*,\theta_*)=&  O\left( r_*^{\sigma -1} r_*^{\frac{2(p-q)}{p-1}} \right) + O\left( \theta^{\frac{1-\sigma}{\alpha}} r_*^{\frac{2(p-q)}{p-1}} \right)  + O\left(\varepsilon_*^2 r_*^{1-\sigma} |\log r_*|^2 \right)\\
=&   O\left( \varepsilon _*^{1- \frac{1}{\sigma} } \varepsilon _*^{\frac{\alpha}{\sigma}(p-q)} \right) + O\left( \theta_*^{\frac{1-\sigma}{\alpha}} r_*^{ \alpha (p-q)} \right) + O\left(\varepsilon_*^2 r_*^{1-\sigma} |\log r_*|^2 \right)\\
=&  O\left( \varepsilon _* \varepsilon _*^{\frac{\alpha}{\sigma}(\beta-q)} \right) + O\left( \theta_*^{\frac{1-\sigma}{\alpha}} r_*^{ \alpha (p-q)} \right)  + O\left(\varepsilon_*^2 r_*^{1-\sigma} |\log r_*|^2 \right)
\end{align*}
 and $$F_2(\varepsilon_*,\theta_*)=  O\left( \varepsilon _* \varepsilon _*^{\frac{\alpha}{\sigma}(\beta-q)} \right) + O\left( \theta_*^{\frac{1-\sigma}{\alpha}} r_*^{ \alpha (p-q)} \right)  + O\left(\varepsilon_*^2 r_*^{1-\sigma} |\log r_*|^2 \right) ,$$
 where $\beta = \frac{p+1}{2}.$ Note that, $\beta>q$ by definition of $q$. Let us prove that there exists $(\varepsilon,\theta)$ in a neighborhood of $(\varepsilon_*,\theta_*)$ which satisfy $F(\varepsilon,\theta)=0.$ We start first by calculating its Jacobian which is given by
 \begin{align*}
\frac{\partial F\left(\varepsilon_*, \theta_*\right)}{\partial\left(\varepsilon, \theta\right)} =&\left[\begin{array}{ll}
\tilde{c}_1\sin \tau_*, & - C \theta_*^{\frac{1-\sigma}{\alpha}-1} \gamma_2 \\
\tilde{c}_1\gamma_3 , & C \theta^{\frac{1-\sigma}{\alpha}-1}   \left(  \omega  \gamma_1 +    \frac{(d-2)}{2} \gamma_2  \right)
\end{array}\right] + O\left(\varepsilon_*^{\frac{1}{\sigma}} |\log \varepsilon_*|^2 \right)\\
&+O\left( \theta_*^{\frac{1-\sigma}{\alpha}-1} r_*^{ \alpha (p-q)} \right)
 \end{align*}
where $\tau_* =\omega \log(r_*)+\tilde{c}_2$, $\gamma_1:= \omega \sin \tau_* -  \frac{1-\sigma}{\alpha} \cos\tau_* $,  $\gamma_2:= \frac{1-\sigma}{\alpha} \sin \tau_* + \omega \cos \tau_* $ and $\gamma_3= \omega  \cos \tau_* -   \frac{(d-2)}{2}  \sin\tau_*$. Next, we set
$$G(x,y):= F(\varepsilon_*+x,\theta_*+ \theta_*^{\frac{\sigma-1}{\alpha}+1} y). $$
From \eqref{behavior:F1} and \eqref{behavior:F1}, we have
\begin{align*}
G(x,y)= \boK + L \left(\begin{array}{ll} x \\ y \end{array}\right) + E(x,y)
\end{align*}
with $\boK= O\left( \varepsilon _* \varepsilon _*^{\frac{\alpha}{\sigma}(\beta-q)} \right) $ is a constant vector independent of $(x,y)$, $L$ is an invertible matrix given by
$$L= \left[\begin{array}{ll}
\tilde{c}_1\sin \tau_*, & - C \gamma_2 \\
\tilde{c}_1\gamma_3 , & C  \left(  \omega  \gamma_1 +    \frac{(d-2)}{2} \gamma_2  \right)
\end{array}\right] $$
and $E(x,y)$ contains all the remanding terms. Our goal is to prove that there exists a root for the equation $G(x,y)=0$. This is equivalent to consider the fixed point problem $\left(\begin{array}{ll} x \\ y \end{array}\right) = J(x,y) $ where $J(x,y):= - L^{-1} \boK - L^{-1} E(x,y) .$ Note that $J$ maps the set $B$ into itself, where
$$B:=\left\lbrace (x,y):  \ |(x,y)|_B := (x^2+y^2)^{\frac{1}{2}} \leq M \theta_*^{\frac{1-\sigma}{\alpha}}  \varepsilon _*^{\frac{\alpha}{\sigma}(\beta-q)} \right\rbrace  $$
and $M>0$ does not depend on $\varepsilon_*$ and $\theta_*$. Indeed, from \eqref{behavior:F1} and \eqref{behavior:F2}, we have
\begin{align*}
|J(x,y)|_B \lesssim \theta_*^{\frac{1-\sigma}{\alpha}}  \varepsilon _*^{\frac{\alpha}{\sigma}(\beta-q)} + |x| \varepsilon_*^{\frac{1}{\sigma}} |\log \varepsilon_*|^2 + |y|  \varepsilon _*^{\frac{\alpha}{\sigma}(\beta-q)} .
\end{align*}
Hence, using the Brouwer fixed theorem, $J$ has a fixed point in B. Thus, we proved that there exists $(\varepsilon,\theta)$ solution to $F(\varepsilon,\theta)=0$ with
$$\varepsilon = \varepsilon_* \left(1+ O\left( \varepsilon _*^{\frac{\alpha}{\sigma}(\beta-q)}  \right) \right) $$
and
$$\theta = \theta_* \left(1+ O\left( \theta _*^{\frac{1-\sigma}{\sigma}(\beta-q)}  \right) \right). $$
We include this into \eqref{relation-1}, we obtain
\begin{align*}
 \varepsilon + \varepsilon_*  O\left( \varepsilon _*^{\frac{\alpha}{\sigma}(\beta-q)}  \right) &= \frac{C}{\tilde{c}_1}\theta_*^{\frac{1-\sigma}{\alpha}} \cos\left( \alpha \omega \log\left(\theta \right) - \alpha \omega \log\left( \frac{\theta}{\theta_*} \right) + D-\tilde{c}_2\right)  \\
 &= \frac{C}{\tilde{c}_1}\theta_*^{\frac{1-\sigma}{\alpha}} \cos\left( \alpha \omega \log\left(\theta \right) + O\left( \theta _*^{\frac{1-\sigma}{\sigma}(\beta-q)}  \right)  + D-\tilde{c}_2\right) .
\end{align*}
Thus, using \eqref{relation-1} again, we get
$$ \varepsilon =  \frac{C}{\tilde{c}_1}\theta_*^{\frac{1-\sigma}{\alpha}} \cos\left( \alpha \omega \log\left(\theta \right)  + D-\tilde{c}_2\right) \left( 1+ O\left( \theta _*^{\frac{1-\sigma}{\sigma}(\beta-q)}  \right) \right)   $$

Note that, there exists only a countable number of $(\varepsilon,\theta)$. This is why we are going to note them $(\varepsilon_n,\theta_n)$. \eqref{Lambda-theta} is obtained by taking $\tilde{\theta}_n:= \theta_*$, $C_1=\frac{C}{\tilde{c}_1}$, $C_2=  D-\tilde{c}_2 +\frac{\pi}{2}$ and by using the fact that $\varepsilon_n= \lambda_n - \lambda_*$.

Note that the convergence of $u$ to the singular solution $\Phi$ holds by definition of $u$ in \eqref{decomp:ext} and \eqref{decomp:int}.
\end{proof}
\appendix

\section{The singular solution}

\subsection{Proof of Theorem \ref{Thm-sing}}

The proof is very similar to the one of Theorem 1.1 in \cite{MePe} and Theorem 1.1 in \cite{HSKW}. The existence and the role of $\lambda_*$ will not be considered here as it follows exactly like in \cite{HSKW}. We refer to it for more details. However, it is not obvious to derive the influence of the term $|u|^qu$ on the asymptotic behavior near the origin.  We are going to write only the main difficulties in our case for the sake of completeness.

\begin{proof}
We set
$$v(r)=A^{-1} r^{2 /(p-1)} u(r),$$
where $u$ is a solution to \eqref{ODE-pq}. It satisfies

$$
\left\{\begin{array}{ll}
v^{\prime \prime}+\frac{K-1}{r} v^{\prime}+\frac{A^{p-1}}{r^{2}}\left(v^{p}-v\right)+(\lambda-r^2) v+A^{q-1}r^{-\frac{2(q-1)}{p-1}}v^{q}=0, & 0<r<1 \\
v>0, & 0<r<1 \\
v(r) \rightarrow 1 \quad \text { as } \quad r \rightarrow 0
\end{array}\right.
$$
where
$$K=d-\frac{4}{p-1}, \quad \alpha=\frac{2}{p-1}.$$

We introduce
$$
t=\frac{1}{m} \log r, \quad y(t)=v(r)
$$
This yields for $y:$
$$
\text { (II) }\left\{\begin{array}{ll}
y^{\prime \prime}+\mu y^{\prime}-y+y^{p}+ \beta e^{2 m t} y - m^2 e^{4 m t} ) y  + A^{q-1} m^2 e^{\frac{2(p-q)}{p-1}mt}  y^{q}=0,  -\infty<t<0 \\
y>0, \qquad -\infty<t<0 \\
y(0)=0, \quad  y(t) \rightarrow 1 \quad \text { as } \quad t \rightarrow-\infty
\end{array}\right.
$$
where
$$
m=\{\alpha(d-2-\alpha)\}^{-\frac{1}{2}}
$$
and
$$
\mu=m(d-2-2 \alpha), \quad \beta=\frac{\lambda}{\alpha(d-2-\alpha)}.
$$
Note that, $d-2-\alpha=\frac{d}{2}+ \sigma -2 >0 since \frac{d}{2} -2 \geq -\frac{1}{2}$ when $d\geq 3$ and $\sigma \geq 1 $.
We shift
$$t=\tau-\frac{\log \beta}{2 m}.$$
Yields,
$$\left\{\begin{array}{l}
y^{\prime \prime}+\mu y^{\prime}-y+y^{p}+e^{2 m \tau} y- \frac{m^2}{\beta^2} e^{4 m \tau}  y  + A^{q-1} m^2\beta^\frac{q-p}{p-1}  e^{\frac{2(p-q)}{p-1}m\tau}  y^{q}=0, \-\infty<\tau<T  \\
y>0, \qquad  -\infty<\tau<T \\
y(T)=0, \quad y(\tau) \rightarrow 1 \quad \text { as } \tau \rightarrow-\infty
\end{array}\right. $$
where
$$T=\frac{\log \beta}{2 m}$$
We set
$s=-\tau \quad$ and $\quad \eta(s)=y(\tau)-1.$
Then $\eta$ satisfies
\begin{equation}
\label{eq:eta}
\eta^{\prime \prime}-\alpha \eta^{\prime}+(p-1) \eta=f(s), \quad-T<s<\infty \quad \eta(s) \rightarrow 0 \text { as } s \rightarrow \infty
\end{equation}
where
\begin{equation}
\label{def-f}
f(s)=-e^{-2 m s}\{1+\eta(s)\}-\varphi_1(\eta)-\varphi_2(\eta),
\end{equation}
with
$$
\varphi_1(\eta)=(1+\eta)^{p}-1-p \eta
$$
and $$ \varphi_2(\eta)= -\frac{m^2}{\beta^2} e^{-4 m s}  \{1+\eta(s)\}  + A^{q-1} m^2\beta^\frac{q-p}{p-1}  e^{-\frac{2(p-q)}{p-1}ms}  \{1+\eta(s)\}^{q}.$$

 We distinguish the following three cases:
$$\text { (Case 1) } p-1>\frac{\mu^{2}}{4}, \quad(\text { Case } 2) \  p-1=\frac{\mu^{2}}{4}, \quad(\text { Case } 3) \ p-1<\frac{\mu^{2}}{4} .$$
Let us investigate (Case 1). We set
$$\nu=\sqrt{p-1-\frac{\alpha^{2}}{4}}$$
From the method of variation of parameters, the solution to \eqref{eq:eta} can be written as
\begin{equation}
\label{eq:eta-int}
\eta(s)=\frac{1}{\nu} e^{\frac{\alpha}{2} s} \int_{s}^{\infty} e^{-\frac{\mu}{2} \sigma} \sin (\nu(\sigma-s)) f(\sigma, \eta) d \sigma
\end{equation}
Let $S_*>0$. It is sufficient to implement a Banach fixed point argument on the space $\Xi_{S_*}$ of functions on $[S_*,\infty)$ endowed with the norm
$$ \|\eta\|_{\Xi_{S_*}} := \sup_{s \geq  S_* }  \left\lbrace \left( |\eta| + |\eta '|  \right)  e^{\frac{2(p-q)}{p-1}ms} \right\rbrace$$
for
$$ \eta=\boJ[\eta],$$
with $$ \boJ[\eta] = \frac{1}{\nu} e^{\frac{\mu}{2} s} \int_{s}^{\infty} e^{-\frac{\mu}{2} \sigma} \sin (\nu(\sigma-s)) f(\sigma, \eta) d \sigma.$$
More precisely, by the definition of $f$ in \eqref{def-f}, we have
$$|\boJ[\eta]| \lesssim  e^{-\frac{2(p-q)}{p-1}ms}.$$
In addition, differentiating \eqref{eq:eta-int}, we get
$$ \left| \frac{d}{ds} \boJ[\eta] \right| \lesssim |f(s,\eta)| +  e^{\frac{\mu}{2} s} \int_{s}^{\infty} e^{-\frac{\nu}{2} \sigma} |\sin (\nu(\sigma-s))| f(\sigma, \eta) d \sigma \lesssim  e^{-\frac{2(p-q)}{p-1}ms} .$$
Similarly, we have
$$  \|\boJ[\eta_1]-\boJ[\eta_2]\|_{\Xi_{S_*}} \lesssim e^{-\frac{2(p-q)}{p-1}m S_*} \|\eta_1-\eta_2\|_{\Xi_{S_*}}.$$

This shows the local existence of the singular solution. Note that the other cases can be proved similarly. The global existence can be obtained exactly as in \cite{HSKW} using a shooting argument. Since the proof is similar, we omit it.

Let us show \eqref{behav:deriv-phi:origin}. From the above calculation, we have

$$ \Phi(r)=A r^{-\frac{2}{p-1}} y\left(\frac{\log(r)}{m} \right) = A r^{-\frac{2}{p-1}} \left[ 1 + \eta\left(-\frac{\log(r)}{m} \right) \right].$$
Hence,
$$ \Phi'(r)=  -\frac{2A}{p-1} r^{-\frac{2}{p-1}-1} \left[ 1 +\eta\left(-\frac{\log(r)}{m} \right) + \frac{p-1}{2m} \eta'\left(-\frac{\log(r)}{m} \right) \right].$$
Combining this with $\|\eta\|_{\Xi_{S_*}}  \lesssim 1$, we conclude \eqref{behav:deriv-phi:origin}. This finishes the proof of    this theorem.

\end{proof}

\subsection{The behavior at infinity}

 The idea of the proof of the following lemma was first used in \cite[Theorem 2]{BPT} for \eqref{ODE-p}. We provide a complete proof in our setting.
\begin{lemma}
\label{Lem:behav-Phi-infrty}
Let $p>1$ and $q>1$. There exists $K>0$ such that
\begin{equation}
\label{behv:infty:Phi}
\Phi= K \exp\left(-\frac{r^2}{2}\right) r^{\frac{\lambda-d}{2}} \left(1+O\left(\frac{1}{r^2}\right)\right), \textrm{ as } r\to+\infty
\end{equation}
\end{lemma}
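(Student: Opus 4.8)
The plan is to exploit that $\Phi$ is a positive solution vanishing at infinity, so that once $\Phi$ is small the two nonlinearities $\Phi^{p}$ and $\Phi^{q}$ become exponentially small relative to the linear part of \eqref{ODE-pq}; the asymptotics is then dictated by the decaying solution of the linear operator $\partial_{rr}+\tfrac{d-1}{r}\partial_r-(r^2-\lambda)$, whose decaying branch behaves like $e^{-r^2/2}r^{(\lambda-d)/2}$. Concretely, I would first record that $\Phi\to 0$, so there is $R$ with $\Phi^{p-1}(r)+\Phi^{q-1}(r)<1$ for $r\ge R$. Writing \eqref{ODE-pq} as $\Phi''+\tfrac{d-1}{r}\Phi'-\big(r^2-\lambda-\Phi^{p-1}-\Phi^{q-1}\big)\Phi=0$, the effective potential is positive and grows like $r^2$, so a barrier/comparison argument against $e^{-r^2/2}$ yields at least exponential decay of $\Phi$. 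This crude rate is all that is needed to make the source terms $\Phi^{p},\Phi^{q}$ integrable against the relevant weights in the forthcoming integral equation.

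Next I would substitute $\Phi=e^{-r^2/2}r^{\gamma}h$ with $\gamma:=\tfrac{\lambda-d}{2}$ in order to factor out the expected leading behaviour. A direct computation turns \eqref{ODE-pq} into
\begin{equation*}
h''+\Big(\frac{2\gamma+d-1}{r}-2r\Big)h'+\frac{\gamma(\gamma+d-2)}{r^2}\,h+N(r,h)=0,
\end{equation*}
where the precise choice $\gamma=\tfrac{\lambda-d}{2}$ is exactly what cancels the $O(1)$ coefficient of $h$, and where $N(r,h)=e^{-(p-1)r^2/2}r^{(p-1)\gamma}h^{p}+e^{-(q-1)r^2/2}r^{(q-1)\gamma}h^{q}$ gathers the two nonlinear contributions, both exponentially small. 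The claim \eqref{behv:infty:Phi} is thus equivalent to proving that $h$ has a finite positive limit $K$ with $h=K\big(1+O(r^{-2})\big)$.

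To establish this I would recast the $h$-equation in integrated form using the integrating factor $\mu(r):=r^{2\gamma+d-1}e^{-r^2}$, so that $(\mu h')'=-\mu\big(\tfrac{\gamma(\gamma+d-2)}{r^2}h+N(r,h)\big)$. Integrating twice from $r$ to $\infty$, with the boundary condition $\mu h'\to0$ that is forced by $\Phi\to0$ (this selects the decaying branch and eliminates the growing mode), gives
\begin{equation*}
h(r)=K-\int_r^{\infty}s^{-(2\gamma+d-1)}e^{s^2}\Big(\int_s^{\infty}\mu(\sigma)\big(\tfrac{\gamma(\gamma+d-2)}{\sigma^2}h(\sigma)+N(\sigma,h)\big)\,d\sigma\Big)\,ds.
\end{equation*}
A contraction-mapping argument in the weighted space $\{\,\sup_{r\ge R}r^{2}|h-K|<\infty\,\}$ then shows that $h$ converges to some constant $K$, which is positive since $\Phi>0$ and the decaying mode is one-dimensional. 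Evaluating the double integral by Watson's lemma, the dominant contribution $\tfrac{\gamma(\gamma+d-2)}{\sigma^2}$ produces exactly an $O(r^{-2})$ remainder, while $N$ yields only exponentially small corrections, which gives \eqref{behv:infty:Phi} with the stated error $O(1/r^2)$.

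The main obstacle is the mode selection and the accompanying bootstrap: a priori the decay of $\Phi$ only bounds $h$ by $o(e^{r^2/2}r^{-\gamma})$, so one has to rule out the growing homogeneous solution $\sim e^{r^2}r^{-\lambda}$ of the $h$-equation and upgrade the decay to the sharp rate before the fixed point can be closed. The extra nonlinearity $\Phi^q$ does not affect this analysis, since, exactly like $\Phi^p$, it is exponentially subdominant at infinity; the only additional care needed is to verify that both terms in $N(r,h)$ remain negligible throughout the iteration.
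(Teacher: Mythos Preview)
Your plan is sound and would yield \eqref{behv:infty:Phi}, but it follows a genuinely different route from the paper. The paper does \emph{not} factor out the expected profile and run a fixed point. Instead, following Brezis--Peletier--Terman, it sets $\Psi=e^{-r^{2}/2}\Phi$, shows by an integrated form of the equation that $\Psi'<0$, and then studies the logarithmic derivative $Z=\Psi'/\Psi$. A short oscillation argument gives existence of $\lim_{r\to\infty}Z\in[-\infty,0]$; assuming the limit is finite leads via l'H\^opital to $rZ\to-(\lambda+d)/2$, contradicting the crude bound $\Psi\lesssim e^{-r^{2}/2}$, so in fact $Z\to-\infty$. Another application of l'H\^opital then gives $\Psi'/(r\Psi)\to-2$, and two further refinements (computing $\lim E/\Psi$ and $\lim r^{2}(E/\Psi-\tfrac{\lambda-d}{2})$ with $E=r\Psi'+2r^{2}\Psi$) produce the expansion of $\Psi'/\Psi$ to order $r^{-3}$, which one integrates to obtain \eqref{behv:infty:Phi}.

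The trade-off is this: your ansatz $\Phi=e^{-r^{2}/2}r^{\gamma}h$ with $\gamma=\tfrac{\lambda-d}{2}$ is cleaner once written down, and the fixed point packages the $O(r^{-2})$ remainder in one stroke; but, as you correctly flag, it requires a preliminary crude Gaussian bound and a short bootstrap (possibly a finite iteration if $d-\lambda\ge4$) to get $h$ bounded before the contraction closes. The paper's l'H\^opital iteration avoids any a priori sharp decay and any guess of the exponent $\gamma$: each limit is extracted directly from the equation, and the mode selection you worry about is handled implicitly by the contradiction step ruling out a finite limit for $Z$. Both arguments treat the extra nonlinearity $\Phi^{q}$ identically, as an exponentially negligible perturbation at infinity.
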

\begin{proof}
First, we consider the change of variable
\begin{equation}
\label{change1-Phi}
\Phi= e^{\frac{r^2}{2}} \Psi.
\end{equation}
Thus, the equation for $\Psi$ is given by
\begin{equation}
\label{ODE-PsiPhi}
\Psi''+\left(\frac{d-1}{r}+2r \right)\Psi' +(\lambda+d ) \Psi +\exp\left(\frac{(p-1)}{2}r^2\right)\Psi^{p}+\exp\left(\frac{(q-1)}{2}r^2\right)\Psi^{q}=0, \quad r>0,
\end{equation}
Since $\lim_{r\to \infty}\Phi(r)=0$, we infer from \eqref{change1-Phi} that
\begin{equation}
\label{exp-bound-Psi}
\Psi(r)\lesssim e^{-\frac{r^2}{2}} \quad {\rm for \ any} \  r>1.
\end{equation}
As a consequence, we have
\begin{equation}
\label{limZ-infty}
\lim_{r\to \infty} \frac{\Psi'(r)}{\Psi(r)} \in [-\infty,0] \ {\rm exists}.
\end{equation}
Indeed, we shall first prove that $\Psi$ is decreasing for $r>0$ i.e. $\Psi'<0$. We rewrite \eqref{ODE-PsiPhi} as follows
\begin{equation}
\label{ODE-PsiPhi1}
\left(\Psi' r^{d-1} e^{r^2} \right) ' =-(\lambda+d ) r^{d-1}e^{r^2}\Psi -r^{d-1} \exp\left(\frac{(p+1)r^2}{2}\right)\Psi^{p}-\exp\left(\frac{(q+1)}{2}r^2\right)\Psi^{q},
\end{equation}
when $r>0.$ From \eqref{change1-Phi}, \eqref{behav:phi:origin} and the fact that $\sigma:=\frac{d}{2}-\frac{2}{p-1}>1$, we have
$$\lim_{r\to 0}  \Psi'(r) r^{d-1} e^{r^2} =A(p,d) \lim_{r\to 0}  r^{-\frac{2}{p-1}-1} r^{d-1} =A(p,d) \lim_{r\to 0}  r^{\sigma-1} r^{\frac{d}{2}-1}=0 . $$
Thus, we integrate \eqref{ODE-PsiPhi1} between $0$ and $r$ and we obtain
$$\Psi' r^{d-1} e^{r^2} = -(\lambda+d )\int_0^{r} \Psi e^{s^2} s^{d-1} ds -\int_0^{r} \Psi^{p}  e^{\frac{ps^2}{2}}s^{d-1} ds  -\int_0^{r} \Psi^{q}  e^{\frac{qs^2}{2}}s^{d-1} ds <0, \quad r>0. $$
This shows that $\Psi'<0$ and $\Psi$ is strictly monotone.  Next, we denote by $Z(r):=\frac{\Psi'(r)}{\Psi(r)}$. Assume that $Z(r)$ oscillates for $r$ large and $\liminf_{r\to \infty} Z(r)=-\alpha$, with $\alpha>0$. Note that if $\alpha=0$, then $\lim_{r\to \infty} Z(r)=0$ since $Z<0$. We will discuss later this case.
Coming back to $\alpha>0$, we infer that there exists a sequence $R_n \to \infty$ as $n\to \infty$, such that $Z'(R_n)=0$ and $Z(R_n)=-\alpha-\eta_n,$ where $\eta_n \to 0$ as $n\to \infty$. This means that
$$Z'(R_n)= \frac{\Psi''(R_n)}{\Psi(R_n)}-Z(R_n)^2=0.$$
We divide the previous equality by $Z(R_n)$ in order to get
\begin{equation}
\label{Z'-R_n}
\frac{\Psi''(R_n)}{\Psi'(R_n)}=Z(R_n)=-\alpha-\eta_n.
\end{equation}
On the other hand, from \eqref{ODE-PsiPhi} we write
\begin{align*}
\frac{\Psi''(R_n)}{\Psi'(R_n)} = & -\left(\frac{d-1}{R_n}+2R_n \right) - \frac{(\lambda+d)}{Z(R_n)} -   \exp\left(\frac{(p-1)}{2}R_n^2\right) \frac{\Psi^{p-1}(R_n)}{Z(R_n)} \\
& -   \exp\left(\frac{(q-1)}{2}R_n^2\right) \frac{\Psi^{q-1}(R_n)}{Z(R_n)}.
\end{align*}
Hence, combining this with \eqref{exp-bound-Psi}, we obtain
$$\frac{\Psi''(R_n)}{\Psi'(R_n)} \lesssim -\left(\frac{d-1}{R_n}+2R_n \right) + \frac{\lambda+d}{\alpha+\eta_n} +\frac{1}{\alpha+\eta_n}   .$$
Thus, for $n$ large enough we infer that
$$\frac{\Psi''(R_n)}{\Psi'(R_n)} < -(\alpha+\eta_n).$$
This is contradiction with \eqref{Z'-R_n} which is sufficient to conclude \eqref{limZ-infty}.
More precisely, we have
\begin{equation}
\label{limPsi'-Psi}
\lim_{r\to \infty} Z(r)=-\infty, \quad {\rm with} \quad Z(r)=\frac{\Psi'(r)}{\Psi(r)}.
\end{equation}
Indeed, we assume by contradiction that the limit is finite. From \eqref{ODE-PsiPhi}, the equation of $Z$ is given by
$$Z'+2rZ = -(\lambda + d) - f(Z)$$
where
$$f(Z):= Z^2+\frac{d-1}{r} Z +\exp\left(\frac{(p-1)}{2}r^2\right) \Psi^{p-1} +\exp\left(\frac{(q-1)}{2}r^2\right) \Psi^{q-1} .$$
Thus,
$$Z(r)=Z(1)e^{-r^2} -\int_1^r \left((\lambda + d) + f(Z(s))\right) e^{s^2} ds\  e^{-r^2} .$$
Hence, using l'Hopital's rule and \eqref{exp-bound-Psi}, we obtain
\begin{align}
\lim_{r\to \infty} Z(r)&=-  \lim_{r\to \infty}  \frac{\int_1^r \left((\lambda + d) + f(Z(s))\right) e^{s^2} ds}{ e^{r^2}} = - \lim_{r\to \infty}  \frac{f(Z(r))}{2r} \nonumber\\
&=- \lim_{r\to \infty} \left( \frac{Z^2(r)}{2r}+\frac{d-1}{2r^2}Z(r)\right). \label{limitZ}
\end{align}
Yields,
$$\lim_{r\to \infty} \left( Z(r)+\frac{Z^2(r)}{2r}+\frac{d-1}{2r^2}Z(r)\right) =0.$$
As a consequence, we get $\lim_{r\to \infty}  Z(r) =0$. Going back to the first equality in \eqref{limitZ}, similarly we have
$$\lim_{r\to \infty}  rZ(r) =-\lim_{r\to \infty}  \frac{\int_1^r \left((\lambda + d) + f(Z(s))\right) e^{s^2} ds}{ r^{-1} e^{r^2}} = -\frac{(\lambda + d) }{2}.$$
Hence, given $\varepsilon>0$,
$$Z(r)=\frac{\Psi'(r)}{\Psi(r)}\geq -\frac{(\lambda+d)+2\varepsilon}{2r}$$
for $r$ large enough. Thus, $\Psi(r) \geq r^{-\frac{(\lambda + d) }{2}-\varepsilon}$. This is a contradiction with \eqref{exp-bound-Psi}. This ends the proof of \eqref{limPsi'-Psi}. In addition, we have
\begin{equation}
\label{limPsi'-rPsi}
\lim_{r\to \infty} \frac{\Psi'(r)}{r\Psi(r)}=-2.
\end{equation}
Indeed, using l'Hopital's rule and the equation \eqref{ODE-PsiPhi}, we get
\begin{align*}
\lim_{r\to \infty} \frac{\Psi'(r)}{r\Psi(r)}&= \lim_{r\to \infty} \frac{\Psi''(r)}{r\Psi'(r)+\Psi(r)}\\
=&\lim_{r\to \infty} \Bigg( \frac{-\left(\frac{d-1}{r}+2r \right)\Psi'(r) -(\lambda+d ) \Psi(r) +\exp\left(\frac{(p-1)}{2}r^2\right)\Psi^{p}(r)}{r\Psi'(r)+\Psi(r)}\\
& \qquad +\frac{\exp\left(\frac{(q-1)}{2}r^2\right) \Psi^{q-1}}{r\Psi'(r)+\Psi(r)} \Bigg)\\
&=\lim_{r\to \infty} \Bigg( \frac{-\left(\frac{d-1}{r^2}+2 \right) -(\lambda+d ) \frac{\Psi(r)}{\Psi'(r)} +\exp\left(\frac{(p-1)}{2}r^2\right)\Psi^{p-1}(r)\frac{\Psi(r)}{\Psi'(r)}}{1+\frac{\Psi(r)}{r\Psi'(r)}}\\
& \qquad +\frac{\exp\left(\frac{(q-1)}{2}r^2\right)\Psi^{q-1}(r)\frac{\Psi(r)}{\Psi'(r)}}{1+\frac{\Psi(r)}{r\Psi'(r)}}\Bigg)
\end{align*}
Hence, from \eqref{exp-bound-Psi} and \eqref{limPsi'-Psi}, we  conclude \eqref{limPsi'-rPsi}. Note that, combining \eqref{exp-bound-Psi} with \eqref{limPsi'-rPsi}, we deduce that
\begin{equation}
\label{exp-bound-Psi'}
|\Psi'(r)|\lesssim r e^{-\frac{r^2}{2}}.
\end{equation}
Next, we define the function
\begin{equation}
\label{def:E}
E(r):=r\Psi'(r)+2r^2\Psi(r).
\end{equation}
We claim that
\begin{equation}
\label{lim:E/psi}
\lim_{r\to \infty} \frac{E(r)}{\Psi(r)}=\frac{\lambda-d}{2}.
\end{equation}
Indeed, from \eqref{exp-bound-Psi} and \eqref{exp-bound-Psi'} we know that $\lim_{r\to \infty}E(r)=\lim_{r\to \infty}\Psi(r)=0$, thus using l'Hopital's rule, \eqref{ODE-PsiPhi}, \eqref{exp-bound-Psi} and \eqref{limPsi'-rPsi} in order to compute
\begin{align}
&\lim_{r\to \infty} \frac{E(r)}{\Psi(r)}=\lim_{r\to \infty} \frac{E'(r)}{\Psi'(r)}= \lim_{r\to \infty} \frac{\Psi'(r)+r\Psi''(r)+4r\Psi(r)+2r^2\Psi'(r)}{\Psi'(r)}\nonumber\\
&= 2-d-\lim_{r\to \infty} \frac{r\Psi(r)}{\Psi'(r)}\left((\lambda+d-4)+\exp\left(\frac{(p-1)}{2}r^2\right)\Psi^{p-1}+\exp\left(\frac{(q-1)}{2}r^2\right)\Psi^{q-1}\right)\nonumber\\
&= 2-d+\frac{(\lambda+d-4)}{2}=\frac{\lambda-d}{2}.\label{Compute-limE/u}
\end{align}
In addition, we claim that
\begin{equation}
\label{lim:r2E/psi}
\lim_{r\to \infty} r^2 \left(\frac{E(r)}{\Psi(r)}-\frac{\lambda-d}{2}\right)=\frac{\lambda-d}{8}(\lambda+d-4).
\end{equation}
Indeed, following the same computations as in \eqref{Compute-limE/u}, we have
\begin{align*}
\lim_{r\to \infty} r^2\left(\frac{E(r)}{\Psi(r)}-\frac{\lambda-d}{2}\right)&= (4-\lambda-d)\lim_{r\to \infty} r^2\left(\frac{r\Psi(r)}{\Psi'(r)}+\frac{1}{2}\right)\\
&=\left(2-\frac{\lambda+d}{2}\right)\lim_{r\to \infty}r^2 \left(2+\frac{\Psi'(r)}{r\Psi(r)}\right)\frac{r\Psi(r)}{\Psi'(r)}\\
&=\left(2-\frac{\lambda+d}{2}\right)\lim_{r\to \infty}\frac{E(r)}{\Psi(r)}\lim_{r\to \infty}\frac{r\Psi(r)}{\Psi'(r)}\\
&=\frac{\lambda-d}{8}(\lambda+d-4).
\end{align*}
Finally, combining \eqref{lim:E/psi} and \eqref{lim:r2E/psi}, we obtain
$$\frac{E(r)}{\Psi(r)}= \frac{\lambda-d}{2} +\frac{\lambda-d}{8}(\lambda+d-4) \frac{1}{r^2} + o\left(\frac{1}{r^2}\right) \quad {\rm as} \ r\to \infty. $$
Yields, from \eqref{def:E}, that
$$\frac{\Psi'(r)}{\Psi(r)}=-2r+\frac{\lambda-d}{2r} +\frac{\lambda-d}{8}(\lambda+d-4) \frac{1}{r^3} + o\left(\frac{1}{r^3}\right) \quad {\rm as} \ r\to \infty. $$
Hence, integrating between $1$ and $r$, we obtain
\begin{equation}
\label{behv:infty:Psi}
\Psi(r)= K e^{-r^2} r^{\frac{\lambda-d}{2}} \left(1+O\left(\frac{1}{r^2}\right)\right), \textrm{ as } r\to+\infty,
\end{equation}
with $K=\Psi(1)\exp\left\{1+ \frac{\lambda-d}{16}(\lambda+d-4)+\int_1^{\infty} o\left(s^{-3}\right) ds\right\}.$ Thus we finish the proof of \eqref{behv:infty:Phi} by combing \eqref{change1-Phi} with \eqref{behv:infty:Psi}.
\end{proof}

\noindent
Yakine Bahri
\\
Department of Mathematics and Statistics
\\ 
University of Victoria 
\\
3800 Finnerty Road, Victoria, B.C., Canada V8P 5C2 
\\
E-mail: ybahri@uvic.ca

\vspace{0.5cm}

\noindent
Hichem Hajaiej
\\
Department of Mathematics 
\\ 
California State University, Los Angeles, USA
\\
5151 State Drive, 5151 Los Angeles, 90331 California, USA
\\
E-mail:hhajaie@calstatela.edu

\end{document}